\newtheorem{thm}{Theorem} [section]
\newtheorem{lem}{Lemma}[section]
\theoremstyle{definition}
\newtheorem{defn}{Definition}[section]
\theoremstyle{remark}
\newtheorem{rem}{Remark}[section]
\numberwithin{equation}{section}
\DeclareMathOperator{\Tr}{Tr}
\DeclareMathOperator{\ind}{ind}
\begin{document}
\title[On the number of zeros of diagonal quartic forms over finite fields]
{On the number of zeros of diagonal quartic forms over finite fields}
\begin{abstract}
Let $\mathbb{F}_q$ be the finite field of $q=p^m\equiv 1\pmod 4$ elements
with $p$ being an odd prime and $m$ being a positive integer. For $c, y
\in\mathbb{F}_q$ with $y\in\mathbb{F}_q^*$ non-quartic, let $N_n(c)$ and
$M_n(y)$ be the numbers of zeros of $x_1^4+...+x_n^4=c$ and
$x_1^4+...+x_{n-1}^4+yx_n^4=0$, respectively. In 1979, Myerson used Gauss sum and
exponential sum to show that the generating function $\sum_{n=1}^{\infty}N_n(0)x^n$
is a rational function in $x$ and presented its explicit expression.
In this paper, we make use of the cyclotomic theory and exponential sums
to show that the generating functions $\sum_{n=1}^{\infty}N_n(c)x^n$ and
$\sum_{n=1}^{\infty}M_{n+1}(y)x^n$ are rational functions in $x$. We also
obtain the explicit expressions of these generating functions.
Our result extends Myerson's theorem gotten in 1979.
\end{abstract}
\author[J.Y. Zhao]{Junyong Zhao}
\address{Mathematical College, Sichuan University, Chengdu 610064, P.R. China}
\email{jyzhao\_math@163.com}
\author[Y.L. Feng]{Yulu Feng}
\address{Mathematical College, Sichuan University, Chengdu 610064, P.R. China}
\email{yulufeng17@126.com}
\author[S.F. Hong]{Shaofang Hong$^*$}
% Address of record for the research reported here
\address{Mathematical College, Sichuan University, Chengdu 610064, P.R. China}
\curraddr{}
\email{sfhong@scu.edu.cn, hongsf02@yahoo.com, s-f.hong@tom.com}
\author[C.X. Zhu]{Chaoxi Zhu}
\address{Mathematical College, Sichuan University, Chengdu 610064, P.R. China}
\email{zhuxi0824@126.com}
\thanks{$^*$S.F. Hong is the corresponding author and was supported
partially by National Science Foundation of China Grant \#11771304.}
\keywords{Diagonal quartic form, cyclotomic number, generating function, finite fields}
\subjclass[2010]{11T06,\ 11T22}
\maketitle

\section{Introduction and main results}
Let $p$ be a prime and let $\mathbb{F}_q$ be the finite field
of $q=p^m$ elements with $m$ being a positive integer. Then one can find
an element $g\in\mathbb{F}_q$ such that
$\mathbb{F}_{q}^{*}:=\{g^{i}:i=0,\ 1,\ \dots,\ q-2\}:=\langle g\rangle.$
Such element $g$ is called a {\it generator} of $\mathbb{F}_q^*$.
Suppose that $F(x_1,\dots,x_n)$ is a polynomial with $n$ variables
over $\mathbb{F}_q$. We set $N(F=0)$ to be the number of $n$-tuples
$(x_1,\dots,x_n)$ such that $F(x_1,\dots,x_n)=0$. That is,
\begin{equation*}
N(F=0)=\sharp\{(x_1,\dots,x_n)\in\mathbb{F}_q^n:F(x_1,\dots,x_n)=0\}.
\end{equation*}
Determining the exact value of $N(F=0)$ is an important
topic in number theory and finite field. An explicit formula for $N(F=0)$
is known when $\deg(F)\le 2$ (see \cite{[IR], [LN]}). Generally speaking,
it is difficult to present an explicit formula for $N(F=0)$.
Many authors have studied deeply the $p$-adic behavior of
$N(F=0)$ (see, for example, \cite{[AS],[Ax],[Cao],[Che],[K],[MM],[W2],[War]}).
Studying the formula for $N(F=0)$ has attracted lots
of authors for many years (See, for instance,
\cite{[BEW],[HHZ],[KR],[Sc],[W1]} and \cite{[We]} to \cite{[Wo2]}).

A diagonal form over $\mathbb{F}_q$ is an equation of type
\begin{equation*}
a_1x_1^{e}+\cdots+a_n x_n^{e}=c
\end{equation*}
with $e$ being a positive integer, coefficients
$a_1, ..., a_n \in\mathbb{F}_q^*$ and $c\in\mathbb{F}_q$.
If $e=1$, then
\begin{equation*}
N(a_1x_1+\dots+a_nx_n=c)=q^{n-1}.
\end{equation*}
For $e=2$, there is an explicit formula for
$N(a_1x_1^2+ \dots +a_nx_n^2=c)$ in \cite{[LN]}.
For the case $a_1= \dots =a_n=1$, we denote by
$N_n^{(e)}(c)=N(x_1^e+\cdots+x_n^e=c)$.
In 1977, Chowla, Cowles and Cowles \cite{[CCC]}
presented the expression of the generating function
$\sum_{n=1}^{\infty}N_n^{(3)}(0) x^n$
over $\mathbb{F}_p$ with $p\equiv 1 \pmod 3$ as follows:
\begin{equation*}
\sum_{n=1}^{\infty}N_n^{(3)}(0) x^n
=\frac{x}{1-px}+\frac{x^2(p-1)(2+dx)}{1-3px^2-pdx^3},
\end{equation*}
where $d$ is uniquely determined by
$
4p=d^2+27b^2,\ \text{and} \ d\equiv 1\ (\bmod \ 3).
$
In 1979, Myerson \cite{[My1]} extended the Chowla-Cowles-Cowles
theorem from the prime finite field $\mathbb{F}_p$ to the
general finite field $\mathbb{F}_q$. Recently, Hong and Zhu
\cite{[HZ]} proved that the generating function
$\sum_{n=1}^{\infty}N_n^{(3)}(c)x^n$ is a rational
function and also presented its explicit expression.

Throughout this paper, for brevity, we set
$N_n(c):=N_n^{(4)}(c)$. In \cite{[My1]}, Myerson
also showed that the generating function
$\sum_{n=1}^{\infty}N_n(0)x^n$ over $\mathbb{F}_q$ with
$q\equiv 1\pmod 4$ is rational in $x$ and
\begin{equation}\label{eq1.1}
\sum_{n=1}^{\infty}N_n(0) x^n=
\begin{cases}
\frac{x}{1-qx}+\frac{(q-1)x^2(3-6sx-(q-4s^2)x^2)}
{1-6qx^2+8qsx^3+(q^2-4qs^2)x^4}, & \mbox{if }\ q\equiv 1\ \pmod 8,\\
\frac{x}{1-qx}-\frac{(q-1)x^2(1+6sx+(9q-4s^2)x^2)}
{1+2qx^2+8qsx^3+(9q^2-4qs^2)x^4}, & \mbox{if }\ q\equiv 5\ \pmod 8,
\end{cases}
\end{equation}
where $s$ is uniquely determined by $q=s^{2}+4t^{2}$,
$s\equiv 1\ \pmod 4$, and if $p\equiv 1\pmod 4$,
then $\gcd(s, p)=1$.

In this paper, we address the problem of investigating the
rationality of the generating functions of the sequence
$\{N_n(c)\}_{n=1}^\infty$ for any $c\in {\mathbb F}_q^*$.
We will make use of the cyclotomic theory and exponential sums
to study the generating functions $\sum_{n=1}^{\infty}N_n(c)x^n$.
We show that the generating function $\sum_{n=1}^{\infty}N_n(c)x^n$
is a rational function and also arrive at its explicit
expression. The first main result of this paper can be
stated as follows.

\begin{thm}\label{thm1.2}
Let $\mathbb{F}_q$ be the finite field of $q=p^m$ elements with $p$
being an odd prime and $c\in\mathbb{F}_q$.

{\rm (i).} If $q\equiv 1\pmod 4$ and
$c\in\mathbb{F}_q^*=\langle g\rangle$, then
\begin{align*}
\sum_{n=1}^{\infty}N_n(c)x^n=
\begin{cases}
\frac{x}{1-qx}+\frac{6sx^3+(q-4s^2)x^4+B_1(c,x)}{1-6qx^2+8qsx^3+(q^2-4qs^2)x^4},
& \mbox{if } q\equiv 1\ \pmod 8,\\
\frac{x}{1-qx}+\frac{6sx^3+(9q-4s^2)x^4+B_2(c,x)}{1+2qx^2+8qsx^3+(9q^2-4qs^2)x^4},
& \mbox{if } q\equiv 5\ \pmod 8,
\end{cases}
\end{align*}
with
\begin{small}
\begin{align}\label{eq1.2}
&B_1(c,x):=
\begin{cases}
3x-(6s+3)x^2+(-q+4s^2)x^3, & \mbox{if } \ind_g(c)\equiv 0\ \pmod 4,\\
-x+(2s+8t-3)x^2+(-q-8st)x^3, & \mbox{if } \ind_g(c)\equiv 1\ \pmod 4,\\
-x+(2s-3)x^2+(-q+16t^2)x^3, & \mbox{if } \ind_g(c)\equiv 2\ \pmod 4,\\
-x+(2s-8t-3)x^2+(-q+8st)x^3, & \mbox{if } \ind_g(c)\equiv 3\ \pmod 4
\end{cases}
\end{align}
\end{small}
and
\begin{small}
\begin{align}\label{eq1.3'}
&B_2(c,x):=
\begin{cases}
3x+(2s+1)x^2+(3q-4s^2)x^3, & \mbox{if } \ind_g(c)\equiv 0\ \pmod 4,\\
-x+(2s-8t+1)x^2+(3q+8st)x^3, & \mbox{if } \ind_g(c)\equiv 1\ \pmod 4,\\
-x+(-6s+1)x^2+(-5q-16t^2)x^3, & \mbox{if } \ind_g(c)\equiv 2\ \pmod 4,\\
-x+(2s+8t+1)x^2+(3q-8st)x^3, & \mbox{if } \ind_g(c)\equiv 3\ \pmod 4,
\end{cases}
\end{align}
\end{small}
where if $p\equiv 1\ \pmod 4$, then
$s$ and $t$ are uniquely determined by
$$q=s^{2}+4t^{2}, \ \gcd(p,s)=1, s\equiv 1\ \pmod 4,
2t\equiv sg^{\frac{3(q-1)}{4}}\ \pmod p,$$
and if $p\equiv 3\ \pmod 4$, then $s=(-p)^{\frac{m}{2}}$ and $t=0$.

{\rm (ii).} If $q\equiv 3\pmod4$, then
\begin{align*}
\sum_{n=1}^{\infty}N_n(c)x^n=
\begin{cases}
\frac{x}{1-qx}+\frac{(1-q)x^2}{1+qx^2}, & \mbox{if } c=0, \\
\frac{x}{1-qx}+\frac{x+x^2}{1+qx^2}, & \mbox{if } c\ne 0 \text{\ is\ square},\\
\frac{x}{1-qx}+\frac{-x+x^2}{1+qx^2}, & \mbox{if } c \text{\ is\ non-square}.
\end{cases}
\end{align*}
\end{thm}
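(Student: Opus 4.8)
The plan is to reduce everything to the generating function machinery governing the sequence $\{N_n(c)\}$, together with explicit evaluations of quartic Gauss/Jacobi sums. First I would recall the standard exponential-sum expression
\[
N_n(c)=\frac{1}{q}\sum_{x_1,\dots,x_n\in\mathbb{F}_q}\sum_{\psi\in\widehat{\mathbb{F}_q}}\psi\bigl(x_1^4+\cdots+x_n^4-c\bigr)
=q^{n-1}+\frac{1}{q}\sum_{\psi\neq\psi_0}\overline{\psi(c)}\,\Bigl(\sum_{x\in\mathbb{F}_q}\psi(x^4)\Bigr)^{n},
\]
where $\psi_0$ is the trivial additive character. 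Writing $\sum_{x}\psi(x^4)=\sum_{j=0}^{3}G(\chi^j,\psi)$ for $\chi$ a fixed quartic multiplicative character (with the convention $G(\chi^0,\psi)=0$ when summed this way, or rather splitting off the constant term carefully), and then, for $\psi$ ranging over the nontrivial additive characters $\psi_a(x)=\psi_1(ax)$, using $G(\chi^j,\psi_a)=\overline{\chi^j(a)}G(\chi^j,\psi_1)$, I would collapse the sum over $a$ to obtain an expression of $N_n(c)$ as $q^{n-1}$ plus a linear combination, over the four cosets of $(\mathbb{F}_q^*)^4$ in $\mathbb{F}_q^*$, of $n$-th powers of the quantities $\eta_k:=\sum_{j=0}^{3}\chi^j(\xi_k)G(\chi^j,\psi_1)$ where $\xi_k=g^k$ are coset representatives. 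This is exactly the point where the case $\mathrm{ind}_g(c)\bmod 4$ enters.

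Next I would carry out the generating-function step: if $N_n(c)=q^{n-1}+\sum_{k} \alpha_k(c)\,\eta_k^{\,n}$ for suitable constants $\alpha_k(c)$ depending only on $\mathrm{ind}_g(c)\bmod 4$, then
\[
\sum_{n=1}^{\infty}N_n(c)x^n=\frac{x}{1-qx}+\sum_{k}\alpha_k(c)\,\frac{\eta_k x}{1-\eta_k x}.
\]
Since the $\eta_k$ are algebraic integers that are permuted (up to sign/conjugation) by the Galois action and satisfy a degree-$4$ (in the $q\equiv 1\bmod 4$ case) or degree-$2$ (in the $q\equiv 3\bmod 4$ case) polynomial with rational integer coefficients, the combined sum over $k$ is a rational function in $x$ whose denominator is that characteristic polynomial reversed — this matches the stated denominators $1-6qx^2+8qsx^3+(q^2-4qs^2)x^4$, $1+2qx^2+8qsx^3+(9q^2-4qs^2)x^4$, and $1+qx^2$. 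The numerators $B_1(c,x)$, $B_2(c,x)$ then come out of clearing denominators and collecting, using the relations among $\sum\eta_k^r$ ($r=1,2,3$) — i.e. the Newton identities applied to the elementary symmetric functions of the $\eta_k$, which are themselves expressible through $s$ and $t$ via the classical evaluation of the quartic Gauss sum $G(\chi,\psi_1)^2$ and $G(\chi^2,\psi_1)=\pm\sqrt{q}$ (the precise sign fixed by $q\equiv1$ vs $5\bmod 8$). The case $q\equiv 3\bmod 4$ is easier: then $-1$ is a non-square so $4\mid q-1$ fails, the map $x\mapsto x^4$ on $\mathbb{F}_q$ has the same image as $x\mapsto x^2$, and the problem collapses to a diagonal quadratic form, whose zero-count is classical; summing the resulting geometric series gives the three displayed formulas directly.

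The main obstacle — and the bulk of the work — is the bookkeeping in the $q\equiv 1\bmod 4$ case: pinning down the four constants $\alpha_k(c)$ and, crucially, the explicit values of the elementary symmetric polynomials in $\eta_0,\eta_1,\eta_2,\eta_3$ in terms of $s$ and $t$ with the correct signs, which requires the precise normalization $2t\equiv s g^{3(q-1)/4}\pmod p$ tying the ambiguous sign of the quartic Gauss sum to the chosen generator $g$ (and the separate, cleaner description $s=(-p)^{m/2}$, $t=0$ when $p\equiv 3\bmod 4$, where the quartic characters are "imaginary" and the Gauss sums degenerate). I would handle this by first treating $c\in(\mathbb{F}_q^*)^4$ (so $\chi^j(c)=1$ for all $j$), where the numerator must reduce — after adding back the $n=1,2$ discrepancy absorbed into the constant terms — to something consistent with Myerson's formula \eqref{eq1.1} for $N_n(0)$ shifted appropriately, thereby calibrating all constants; then the other three residue classes follow by twisting each $\eta_k$ by $\chi^j(c)=i^{j\cdot\mathrm{ind}_g(c)}$ and re-collecting, which produces the four-way split in \eqref{eq1.2} and \eqref{eq1.3'}. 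Throughout, the identity $q=s^2+4t^2$ and $|G(\chi,\psi_1)|^2=q$ are the only analytic inputs; everything else is finite linear algebra over $\mathbb{Z}[i]$.
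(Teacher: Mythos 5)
Your skeleton matches the paper's proof of part (i): orthogonality of additive characters gives $N_n(c)=q^{n-1}+\frac{1}{q}\sum_{l=0}^{3}T_{g^l}^{n}\lambda_l(c)$, where $T_u=\sum_{v\in\mathbb{F}_q}\exp\big(2\pi\mathrm{i}\Tr(uv^4)/p\big)$ and $\lambda_l(c)$ is a Gauss period; Myerson's quartic satisfied by $T_1,T_g,T_{g^2},T_{g^3}$ (Lemma \ref{lem2.12}) yields a fourth-order linear recurrence for the character-sum tail, hence rationality with exactly the stated denominators; and for $q\equiv 3\pmod 4$ the identity $N(x^4=c)=N(x^2=c)$ collapses the problem to diagonal quadrics, exactly as in the paper's part (ii). So the rationality statement, the denominators, and part (ii) are sound on your route.

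The gap is in how you propose to produce $B_1(c,x)$ and $B_2(c,x)$. The numerator is built from the weighted sums $\sum_{l}\lambda_l(c)\,T_{g^l}^{\,r}$ for $1\le r\le 4$, and the weights $\lambda_l(c)$ differ across the four cosets; the power sums $\sum_l T_{g^l}^{\,r}$ coming from Newton's identities, together with Myerson's formula (\ref{eq1.1}) for $N_n(0)$ (which only sees the equal-weight combination), cannot determine them. Concretely, $N_{n+1}(0)=N_n(0)+(q-1)N_n(-1)$, so calibrating against (\ref{eq1.1}) pins down at most the class of $-1$, not the four separate classes appearing in (\ref{eq1.2}) and (\ref{eq1.3'}); "calibrating all constants" from $N_n(0)$ is therefore impossible. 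Your fallback of expanding $T_{g^l}$ and $\lambda_l(c)$ in quartic Gauss sums and re-collecting can be made to work, but only after importing the explicit evaluation of the quartic Jacobi sum (equivalently, of the cyclotomic numbers of order $4$) in terms of $s$ and $t$, with Katre--Rajwade's resolution of the sign ambiguity $2t\equiv sg^{3(q-1)/4}\pmod p$; this is a substantive classical theorem, so your closing claim that $q=s^2+4t^2$ and $|G(\chi,\psi_1)|^2=q$ are the only analytic inputs, the rest being linear algebra over $\mathbb{Z}[\mathrm{i}]$, undersells what must be assumed. The paper imports precisely this ingredient as Lemma \ref{lem2.4} and converts it, through the dimension-$n$ cyclotomic numbers of Lemmas \ref{lem2.6}--\ref{lem2.8}, into the explicit initial values $N_1(c),\dots,N_4(c)$ of Lemma \ref{lem2.10}, after which the numerators follow by the same clearing of denominators you describe. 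If you replace your calibration step by such a Jacobi-sum (or cyclotomic-number) computation, your outline becomes a complete proof along essentially the paper's lines.
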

Evidently, Theorem \ref{thm1.2} extends Myerson's theorem \cite{[My1]}.

Let $M_n^{(e)}(y)$ be the number of zeros of
\begin{equation*}
x_1^e+\dots+x_{n-1}^e+yx_n^e=0
\end{equation*}
with $y\in\mathbb{F}_q^*$ not being the $e$-th power of any
element in $\mathbb{F}_q$. Gauss \cite{[Gauss]} showed
that if $q=p\equiv1\pmod3$ and $y$ is non-cubic, then $M_3^{(3)}(y)=p^2+\frac{1}{2}(p-1)(-c+9d)$,
where $c$ and $d$ are uniquely determined by
$4p=c^2+27d^2,~c\equiv 1\pmod 3$ except for the sign of $d$.
In 1978, Chowla, Cowles and Cowles \cite{[CCC78]} determined
the sign of $d$ for the case of $2$ being a non-cubic element of
${\mathbb F}_p$. Hong and Zhu \cite{[HZ]} determined the
sign of $d$ when $2$ is cubic in ${\mathbb F}_p$. Furthermore,
they showed that the generating function
$\sum_{n=1}^{\infty} M_{n+1}^{(3)}(y)x^{n}$ is a rational
function for any $y\in\mathbb F_q^*$ with $y$ being
non-cubic over ${\mathbb F}_q$ and also presented
its explicit expression.

For brevity, we let $M_{n}(y):=M_n^{(4)}(y)$ in the
remaining part of this paper. With the help of Theorem
\ref{thm1.2}, we prove that the generating function
$\sum_{n=1}^{\infty}M_{n+1}(y)x^n$ is rational
as the following second main result of this paper shows.

\begin{thm}\label{thm1.3}
Let $\mathbb{F}_q$ be the finite field of $q=p^m$ elements
with $p$ being an odd prime. Let
$y\in\mathbb{F}_q^*=\langle g\rangle$ be non-quartic.
Then each of the following is true.

{\rm (i).} If $q\equiv 1\pmod 4$, then
\begin{equation*}
\sum_{n=1}^{\infty}M_{n+1}(y)x^n=
\begin{cases}
\frac{qx}{1-qx}+\frac{(q-1)
(3x^2+B_1(y,x))}{1-6qx^2+8qsx^3+(q^2-4qs^2)x^4},
& \mbox{if } q\equiv 1\pmod 8, \\
\frac{qx}{1-qx}+\frac{(q-1)
(-x^2+B_2(-y,x))}{1+2qx^2+8qsx^3+(9q^2-4qs^2)x^4},
& \mbox{if } q\equiv 5\pmod 8,
\end{cases}
\end{equation*}
where $s$, $t$, $B_1(y,x)$ and $B_2(-y,x)$ are
given as in Theorem \ref{thm1.2}.

{\rm (ii).} If $q\equiv 3\pmod 4$, then
$$\sum_{n=1}^{\infty}M_{n+1}(y)x^n=\frac{qx}{1-qx}+\frac{(q-1)x}{1+qx^2}.$$
\end{thm}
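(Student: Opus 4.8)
The plan is to reduce the count $M_{n+1}(y)$ to a linear combination of the quantities $N_k(c)$ already handled in Theorem \ref{thm1.2}, and then to read off the generating function from the closed forms recorded there. The starting observation is that an $(n+1)$-tuple $(x_1,\dots,x_n,x_{n+1})$ with $x_1^4+\dots+x_n^4+yx_{n+1}^4=0$ is obtained by first choosing $x_{n+1}\in\mathbb F_q$ and then solving $x_1^4+\dots+x_n^4=-yx_{n+1}^4$. When $x_{n+1}=0$ this contributes $N_n(0)$; when $x_{n+1}\ne 0$, the value $-yx_{n+1}^4$ ranges over the coset $-y\,(\mathbb F_q^*)^4$, and since $y$ is non-quartic this coset is disjoint from $(\mathbb F_q^*)^4$ and is a single coset of the group of fourth powers. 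Hence
\begin{equation*}
M_{n+1}(y)=N_n(0)+\frac{q-1}{4}\,N_n(-y),
\end{equation*}
because each of the $(q-1)/4$ elements $x_{n+1}$ giving a fixed fourth power contributes the same count $N_n(-yx_{n+1}^4)$, and $N_n(c)$ depends only on the coset $c(\mathbb F_q^*)^4$, i.e. only on $\ind_g(c)\bmod 4$. (When $q\equiv 3\pmod 4$ every nonzero element is a fourth power iff it is a square, and one uses instead that $-y$ runs over the nonsquares, so $M_{n+1}(y)=N_n(0)+\tfrac{q-1}{2}N_n(\text{non-square})$.)

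Next I would multiply by $x^n$ and sum over $n\ge 1$. The term $\sum_{n\ge 1}N_n(0)x^n$ is exactly Myerson's generating function \eqref{eq1.1}, and $\sum_{n\ge 1}N_n(-y)x^n$ is the generating function of Theorem \ref{thm1.2}(i) with $c=-y$; both share the same denominator $1-6qx^2+8qsx^3+(q^2-4qs^2)x^4$ (resp. $1+2qx^2+8qsx^3+(9q^2-4qs^2)x^4$) in the appropriate congruence case, so the sum combines over a common denominator with no spurious cancellation issues. Writing the numerator of \eqref{eq1.1} as $(q-1)x^2\big(3-6sx-(q-4s^2)x^2\big)$ (in the $q\equiv1\pmod8$ case) and the numerator from Theorem \ref{thm1.2}(i) as $6sx^3+(q-4s^2)x^4+B_1(-y,x)$, one gets after combining
\begin{equation*}
\sum_{n\ge1}M_{n+1}(y)x^n=\frac{x}{1-qx}+\Big(\text{stuff}\Big)+\frac{q-1}{4}\cdot\frac{6sx^3+(q-4s^2)x^4+B_1(-y,x)}{1-6qx^2+8qsx^3+(q^2-4qs^2)x^4},
\end{equation*}
and the bookkeeping must be arranged so that $x/(1-qx)$ plus the $N_n(0)$ contribution collapses to $qx/(1-qx)$ and the remaining rational pieces assemble into $(q-1)\big(3x^2+B_1(y,x)\big)$ over the common denominator. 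Here one needs the elementary identity $B_1(-y,x)=\tfrac{4}{q-1}\big[\,(q-1)(3x^2+B_1(y,x))\,\big] - (\text{the Myerson numerator contribution})$ — in practice this is the statement that replacing $c$ by $-c$ shifts $\ind_g(c)$ by $\ind_g(-1)\equiv (q-1)/4\bmod 4$ (which is $0$ when $q\equiv1\pmod8$ and $2$ when $q\equiv1\pmod4$ but $q\not\equiv1\pmod8$, hence the appearance of $B_2(-y,x)$ rather than $B_2(y,x)$ in the second case), together with the numerical relations among the four branches of $B_1,B_2$ forced by $\sum_c N_n(c)=q^n$.

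The routine but delicate part — and the main obstacle — is precisely this last algebraic reconciliation: verifying that the four-case expressions $B_1(-y,x)$, $B_2(-y,x)$ combine with Myerson's numerator to produce exactly the stated numerators $(q-1)(3x^2+B_1(y,x))$ and $(q-1)(-x^2+B_2(-y,x))$, in each of the residue classes $\ind_g(y)\bmod 4$ and each of $q\equiv1,5\pmod8$. I expect this to be handled by a direct term-by-term comparison of polynomial coefficients using $q=s^2+4t^2$, rather than by any conceptual argument. The case $q\equiv3\pmod4$ in part (ii) is comparatively immediate: from Theorem \ref{thm1.2}(ii) one has $\sum_{n\ge1}N_n(\text{non-square})x^n=\frac{x}{1-qx}+\frac{-x+x^2}{1+qx^2}$ and $\sum_{n\ge1}N_n(0)x^n=\frac{x}{1-qx}+\frac{(1-q)x^2}{1+qx^2}$, so $M_{n+1}(y)=N_n(0)+\tfrac{q-1}{2}N_n(\text{non-square})$ gives $\sum_{n\ge1}M_{n+1}(y)x^n=\frac{q-1+2}{2}\cdot\frac{x}{1-qx}\cdot\frac{2}{?}$ — more carefully, $\frac{x}{1-qx}\big(1+\tfrac{q-1}{2}\big)$ together with $\frac{(1-q)x^2+\frac{q-1}{2}(-x+x^2)}{1+qx^2}$, and one checks this simplifies to $\frac{qx}{1-qx}+\frac{(q-1)x}{1+qx^2}$ after using $\frac{x}{1-qx}\cdot\frac{q+1}{2}+\frac{(1-q)x^2+\frac{q-1}{2}(x^2-x)}{1+qx^2}=\frac{qx}{1-qx}+\frac{(q-1)x}{1+qx^2}$, a one-line manipulation.
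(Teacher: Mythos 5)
Your overall strategy (split on the last variable being zero or not, reduce $M_{n+1}(y)$ to $N_n(0)$ and $N_n(-y)$, then feed in Myerson's formula (1.1) and Theorem \ref{thm1.2}) is exactly the paper's route, but your counting identity is wrong, and the error is not cosmetic. For each of the $q-1$ nonzero choices of $x_{n+1}$, the number of $(x_1,\dots,x_n)$ with $x_1^4+\cdots+x_n^4=-yx_{n+1}^4$ equals $N_n(-yx_{n+1}^4)=N_n(-y)$, since $x_{n+1}^4$ is a fourth power and $N_n(c)$ depends only on the quartic coset of $c$. Hence
\[
M_{n+1}(y)=N_n(0)+(q-1)\,N_n(-y),
\]
not $N_n(0)+\tfrac{q-1}{4}N_n(-y)$: you replaced the number of nonzero values of $x_{n+1}$ by the number of distinct fourth powers (each fourth power is hit by four values of $x_{n+1}$, which restores the factor $q-1$). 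With your factor the very first consistency check fails: the polar parts give $\frac{x}{1-qx}\bigl(1+\frac{q-1}{4}\bigr)\neq\frac{qx}{1-qx}$, so no "algebraic reconciliation" among the branches of $B_1,B_2$ can produce the stated numerators; the auxiliary identity you posit for $B_1(-y,x)$ is an artifact of this missing factor of $4$. (Also $\ind_g(-1)=\frac{q-1}{2}$, not $\frac{q-1}{4}$; your conclusion that the shift is $0$ mod $4$ for $q\equiv1\pmod 8$ and $2$ mod $4$ for $q\equiv5\pmod 8$ is the relevant fact, and it is why $B_1(-y,x)=B_1(y,x)$ in the first case while $B_2(-y,x)$ remains in the second.)

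The same slip derails part (ii). For $q\equiv3\pmod4$ quartics coincide with squares, and both $y$ and $-1$ are non-squares, so $-y\,x_{n+1}^4$ is a nonzero square for every $x_{n+1}\neq0$; the correct identity is $M_{n+1}(y)=N_n(0)+(q-1)N_n(1)$ (square argument), not $N_n(0)+\tfrac{q-1}{2}N_n(\text{non-square})$. Your own final check exposes this: $\frac{x}{1-qx}\bigl(1+\frac{q-1}{2}\bigr)=\frac{q+1}{2}\cdot\frac{x}{1-qx}$ is not $\frac{qx}{1-qx}$, and $\frac{(1-q)x^2+\frac{q-1}{2}(-x+x^2)}{1+qx^2}$ does not simplify to $\frac{(q-1)x}{1+qx^2}$, so the asserted one-line manipulation is false as written. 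Once the identity carries the factor $q-1$ and the square value, everything collapses immediately as in the paper: in case (i) Myerson's numerator $(q-1)x^2\bigl(3-6sx-(q-4s^2)x^2\bigr)$ plus $(q-1)\bigl(6sx^3+(q-4s^2)x^4+B_1(y,x)\bigr)$ telescopes to $(q-1)\bigl(3x^2+B_1(y,x)\bigr)$ with no delicate case-by-case comparison (and similarly with $B_2(-y,x)$ when $q\equiv5\pmod 8$), and in case (ii) one gets $\frac{qx}{1-qx}+\frac{(q-1)x}{1+qx^2}$ directly.
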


This paper is organized as follows. In Section 2, we present several
lemmas on dimension $n$ cyclotomic numbers of order $k$. Section 3 is
devoted to computing the values of $N_n(c)$ for $1\le n\le 4$.
In Sections 4 and 5, we provide the proofs of Theorems \ref{thm1.2}
and \ref{thm1.3}, respectively.

\section{Dimension $n$ cyclotomic numbers of order $k$}
Throughout, we always let $q=p^{m}=kf+1$
with $k$ and $f$ being positive integers. For any
integer $r\ge 1$, we define $\langle r\rangle:=\{0,...,r-1\}$.
We begin with a well-known result on the solutions
of linear congruence.

\begin{lem}\label{lem2.1} \cite{[Mc]}
Let $a_1,\dots,a_n$ be nonzero integers
and let $r>1$ be a positive integer.
Suppose $d=\gcd(a_1,\ \dots,\ a_n,\ r)$. Then the congruence
$$a_1x_1 + \dots + a_nx_n \equiv b\ (\bmod \ r)$$
has a solution if and only if $d|b$. If $d|b$, then
the above congruence has exactly $dr^{n-1}$ solutions
$(x_1, ..., x_n)$ modulo $r$.
\end{lem}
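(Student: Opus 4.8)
The statement is classical, and my plan is to reduce the general $n$-variable, general-modulus case to the one-variable case by induction on $n$ together with the structure of $\mathbb{Z}/r\mathbb{Z}$. First I would dispose of $n=1$: the congruence $a_1x_1\equiv b\pmod r$ with $d=\gcd(a_1,r)$ is solvable iff $d\mid b$, and when $d\mid b$ the solutions form a coset of the subgroup $\{x: a_1x\equiv 0\}$, which has exactly $d$ elements modulo $r$ (write $a_1=d a_1'$, $r=dr'$ with $\gcd(a_1',r')=1$; then $a_1x_1\equiv 0\pmod r \iff r'\mid x_1$, giving the $d$ residues $0,r',2r',\dots,(d-1)r'$). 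This is exactly $d r^{0}=d$ solutions, matching the formula.

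For the inductive step, suppose the result holds for $n-1$ variables. Set $d'=\gcd(a_1,\dots,a_{n-1},r)$, so that $d=\gcd(d',a_n)$. For each fixed value $x_n\in\langle r\rangle$, the remaining congruence is $a_1x_1+\dots+a_{n-1}x_{n-1}\equiv b-a_nx_n\pmod r$, which by the inductive hypothesis has either $0$ or exactly $d' r^{n-2}$ solutions $(x_1,\dots,x_{n-1})$ modulo $r$, according to whether $d'\mid (b-a_nx_n)$ or not. Hence the total count is $d' r^{n-2}$ times the number of $x_n\in\langle r\rangle$ with $a_nx_n\equiv b\pmod{d'}$. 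Applying the $n=1$ case \emph{with modulus $d'$} to this last congruence: it is solvable iff $\gcd(a_n,d')\mid b$, i.e.\ iff $d\mid b$ (using $\gcd(d',r)=d'$ so $\gcd(a_n,d',r)=\gcd(a_n,d')=d$, and noting $d'\mid b$ is implied once $d\mid b$ by the original solvability criterion — one should check this compatibility carefully), and in that case it has exactly $\gcd(a_n,d')\cdot r/d' \cdot$ (correction:) the number of $x_n$ modulo $r$ lying in the solution set modulo $d'$ equals $\gcd(a_n,d')\cdot (r/d')=d\cdot r/d'$. Multiplying, the total is $d' r^{n-2}\cdot d r/d' = d r^{n-1}$, as claimed.

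The one genuinely delicate point — the main obstacle — is bookkeeping the interaction of the three gcd's $\gcd(a_n,d')$, $\gcd(d',r)=d'$, and $d=\gcd(a_1,\dots,a_n,r)$, and in particular verifying that the solvability condition "$d\mid b$" produced by the two-stage argument coincides with the one in the statement, rather than being the a priori stronger "$d'\mid b$ and $\gcd(a_n,d')\mid b$." The resolution is that $d=\gcd(\gcd(a_1,\dots,a_{n-1},r),a_n)=\gcd(a_1,\dots,a_n,r)$ by associativity of gcd, and when counting $x_n$ one must count residues modulo $r$ (not modulo $d'$), which is why the factor $r/d'$ appears and makes the final power of $r$ come out to $n-1$. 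Alternatively, one could bypass the induction entirely by invoking the Chinese Remainder Theorem to reduce to $r$ a prime power and then use Smith normal form of the $1\times n$ integer matrix $(a_1,\dots,a_n)$; but the inductive argument above is the most self-contained and is presumably what the cited reference \cite{[Mc]} records, so I would present that. Since the paper merely cites this as a known lemma, a brief indication along these lines — or simply the reference — suffices, and I would not grind through every gcd identity in the final text.
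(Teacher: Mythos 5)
Your proof is correct. Note that the paper itself gives no argument for this lemma at all: it is stated with a citation to McCarthy's book and never proved, so there is nothing internal to compare against. Your induction on $n$ with the standard one-variable base case is the usual textbook route and all the gcd bookkeeping comes out right: since $d'=\gcd(a_1,\dots,a_{n-1},r)$ divides $r$, each residue class modulo $d'$ contains exactly $r/d'$ residues modulo $r$, the inner congruence $a_nx_n\equiv b\pmod{d'}$ is solvable precisely when $\gcd(a_n,d')=d$ divides $b$ (which settles the solvability criterion directly, so your worry about an a priori stronger condition ``$d'\mid b$'' never arises), and the count $d'r^{\,n-2}\cdot d\cdot(r/d')=dr^{\,n-1}$ is exactly the claimed formula. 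The only blemish is expository: the sentence in which you count the admissible $x_n$ modulo $r$ is garbled mid-stream before the self-correction, and in a final write-up you should state it cleanly as ``$d$ solutions modulo $d'$, hence $d\cdot r/d'$ values of $x_n$ in $\{0,\dots,r-1\}$.'' Given that the paper treats this as a known result, either your short inductive proof or the bare citation would be acceptable.
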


In particular, we have the following result on
restricted solutions of the linear congruence.

\begin{lem}\label{cor2.1}
Let $a_1,\dots, a_n$ and $b$ be nonzero integers and let
$d=\gcd(a_1, \dots, a_n, kf)$. If $d|b$, then the number
of the restricted solutions $(x_1, ..., x_n)$ with
$0\leq x_1,\dots,x_n\leq \frac{kf}{d}-1$ of the
linear congruence
\begin{equation}\label{eq2.1}
a_1x_1 + \dots + a_nx_n \equiv b \pmod {kf}
\end{equation}
is equal to $(\frac{kf}{d})^{n-1}$.
\end{lem}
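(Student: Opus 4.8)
The plan is to deduce this from Lemma~\ref{lem2.1} by dividing the congruence \eqref{eq2.1} through by $d$. Since $d=\gcd(a_1,\dots,a_n,kf)$ divides each $a_i$ and divides $kf$, and by hypothesis $d\mid b$, I would write $a_i=da_i'$ for $1\le i\le n$, write $kf=dr'$ with $r':=\frac{kf}{d}$, and write $b=db'$. Each $a_i'$ is a nonzero integer since $a_i\ne 0$.

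First I would observe that for any tuple $(x_1,\dots,x_n)$ the congruence \eqref{eq2.1} is equivalent to
\[
d\Big(a_1'x_1+\dots+a_n'x_n-b'\Big)\equiv 0\pmod{dr'},
\]
that is, to $a_1'x_1+\dots+a_n'x_n\equiv b'\pmod{r'}$. Hence the restricted solutions of \eqref{eq2.1} with $0\le x_1,\dots,x_n\le r'-1$ are exactly the solutions modulo $r'$ of
\begin{equation*}
a_1'x_1+\dots+a_n'x_n\equiv b'\pmod{r'}.
\end{equation*}

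Next I would note that $\gcd(a_1',\dots,a_n',r')=\frac{1}{d}\gcd(a_1,\dots,a_n,kf)=1$, so the reduced congruence has coefficient-gcd equal to $1$ with its modulus $r'$. If $r'>1$, then Lemma~\ref{lem2.1} (applied with modulus $r'$ and with the quantity ``$d$'' there equal to $1$) gives exactly $1\cdot (r')^{n-1}=\big(\frac{kf}{d}\big)^{n-1}$ solutions modulo $r'$, which is precisely the count claimed. The only degenerate case is $r'=1$, i.e. $d=kf$: here the single tuple $(0,\dots,0)$ is the unique tuple in the prescribed range, and it satisfies \eqref{eq2.1} because $kf\mid b$, so again the count is $1=\big(\frac{kf}{d}\big)^{n-1}$.

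Since the whole argument is a direct reduction to Lemma~\ref{lem2.1}, I do not expect any genuine obstacle. The only points needing a line of care are the verification that restricting the variables to the box $\{0,\dots,r'-1\}^n$ matches exactly ``counting solutions modulo $r'$'' of the reduced congruence, and the separate (trivial) treatment of the boundary case $r'=1$, where Lemma~\ref{lem2.1} does not literally apply.
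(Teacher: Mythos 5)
Your proposal is correct and follows essentially the same route as the paper: dividing the congruence by $d$ to obtain $\frac{a_1}{d}x_1+\dots+\frac{a_n}{d}x_n\equiv\frac{b}{d}\pmod{\frac{kf}{d}}$, whose coefficient gcd with the modulus is $1$, and then invoking Lemma~\ref{lem2.1}. Your extra care in checking that the box $\{0,\dots,\frac{kf}{d}-1\}^n$ matches counting modulo $\frac{kf}{d}$ and in treating the degenerate case $\frac{kf}{d}=1$ (where Lemma~\ref{lem2.1} formally requires modulus $>1$) only makes the argument slightly more complete than the paper's.
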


\begin{proof}
The condition that $d|b$ is certainly necessary for
the congruence to have a solution.

On the other hand, suppose that $d|b$. The congruence
(\ref{eq2.1}) is equivalent to
\begin{equation}\label{eq2.2}
\frac{a_1}{d}x_1 + \dots + \frac{a_n}{d}x_n
\equiv \frac{b}{d} \pmod  {\frac{kf}{d}}.
\end{equation}
The congruence (\ref{eq2.2}) has exactly
$(\frac{kf}{d})^{n-1}$ solutions from Lemma
\ref{lem2.1}.
\end{proof}

The next lemma provides a basic fact in the cyclotomic
theory in $\mathbb{F}_q$.

\begin{lem}\label{lem2.2}\cite{[St]}
Let $\mathbb{F}_{q}^{*}=\langle g\rangle$ and let
$g^{ku_1+i}=g^{ku_2+j}$ with $0\leq u_1,\ u_2\leq f-1$
and $0\leq i,\ j\leq k-1$. Then $u_1=u_2$ and $i=j$.
\end{lem}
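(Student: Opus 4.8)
The plan is to argue directly from the cyclic-group structure of $\mathbb{F}_q^*=\langle g\rangle$. Since $g$ has order $q-1=kf$, the equality $g^{ku_1+i}=g^{ku_2+j}$ is equivalent to the congruence
\[
ku_1+i\equiv ku_2+j\pmod{kf},
\]
that is, $k(u_1-u_2)\equiv j-i\pmod{kf}$. First I would reduce this congruence modulo $k$: the left side is divisible by $k$, hence $k\mid (j-i)$. But $0\le i,j\le k-1$ forces $-(k-1)\le j-i\le k-1$, and the only multiple of $k$ in that range is $0$; therefore $i=j$.

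Next, with $i=j$ the congruence becomes $k(u_1-u_2)\equiv 0\pmod{kf}$, i.e. $u_1-u_2\equiv 0\pmod f$. Since $0\le u_1,u_2\le f-1$ we have $-(f-1)\le u_1-u_2\le f-1$, and again the only multiple of $f$ in this range is $0$, so $u_1=u_2$. This completes the argument.

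There is essentially no obstacle here: the statement is just the assertion that each element of $\mathbb{F}_q^*$ has a unique representation $g^{ku+i}$ with $0\le u\le f-1$, $0\le i\le k-1$, which follows from the division algorithm applied to the exponent modulo $q-1=kf$. The only point requiring a word of care is the range bound that pins the relevant multiple of $k$ (resp. $f$) down to $0$; everything else is the order of a cyclic group. One could alternatively cite this as the standard fact underlying the partition of $\mathbb{F}_q^*$ into cyclotomic classes, but the two-line congruence argument above is self-contained and is what I would write.
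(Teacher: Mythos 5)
Your argument is correct and complete: the reduction of $ku_1+i\equiv ku_2+j\pmod{kf}$ first modulo $k$ and then modulo $f$, combined with the stated range bounds, is exactly the standard uniqueness-of-representation argument. The paper itself offers no proof of this lemma --- it is quoted from Storer's book \cite{[St]} --- so there is nothing to compare against beyond noting that your two-line self-contained verification is precisely the fact the citation is invoking (uniqueness of the decomposition of $\mathbb{F}_q^*$ into the cyclotomic classes $C_0,\dots,C_{k-1}$, each of size $f$).
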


\begin{defn}\label{defn2.1}
Let $\mathbb{F}_{q}^{*}=\langle g\rangle$ and
let $i$ and $j$ be two integers. We define the
{\it cyclotomic number of order $k$ with respect
to $g$}, denoted by  $(i, j)_k$, to be the number
of pairs $(u_1, u_2)\in \langle f\rangle^2$
satisfying that $1+g^{ku_1+i}=g^{ku_2+j}$, where
$1=g^0$ is the multiplicative identity of $\mathbb{F}_q$.
\end{defn}

\begin{rem}\label{rem2.1}
For any $i\in\langle k\rangle$, we define the
{\it cyclotomic classes} $C_i$ with respect to
the generator $g$ of $\mathbb{F}_q^*$
as follows:
\begin{equation*}
C_i:=\{g^{i+ku}: u=0,\ 1,\ \dots,\ f-1\}.
\end{equation*}
By Lemma \ref{lem2.2}, the $k$ cyclotomic classes
$C_0, C_1, ..., C_{k-1}$ are pairwise disjoint,
and their union equals $\mathbb{F}_q^*$.
Evidently, $C_{i+vk}=C_i$ for any integer $v$.
The cyclotomic number $(i,\ j)_{k}$ is equal to
the number of solutions $(x_i, x_j)\in C_i\times C_j$
of the equation $x_i +1=x_j$.
\end{rem}

We now exhibit some properties of cyclotomic number as follows.

\begin{lem}\label{lem2.3}\cite{[St]}
Each of the following is true.
\begin{enumerate}[{\rm (1).}]
\item For any integers $m_1$ and $m_2$,
$(i+m_1k, j+m_2k)_{k}=(i, j)_{k}$.
\item $(i, j)_{k}=(-i, j-i)_{k}$.
\item $(i, j)_{k}=
\begin{cases}
(j, i)_{k}, & \mbox{if }\ f\ is\ even, \\
(j+\frac{k}{2}, i+\frac{k}{2})_{k}, & \mbox{if }\ f\ is\ odd.
\end{cases}$
\end{enumerate}
\end{lem}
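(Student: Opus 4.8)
The statement to prove is Lemma~\ref{lem2.3}, the three basic symmetry properties of cyclotomic numbers. Although it is cited from \cite{[St]}, a proof proposal should proceed from Definition~\ref{defn2.1} and Lemma~\ref{lem2.2} directly.

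\textbf{Plan of proof.} The plan is to translate each identity into a bijection between the sets of index pairs $(u_1,u_2)\in\langle f\rangle^2$ counted by the relevant cyclotomic numbers, using Lemma~\ref{lem2.2} to guarantee that each field element has a well-defined ``residue'' pair $(u,i)$ with $0\le u\le f-1$, $0\le i\le k-1$. For part~(1), if $m_1,m_2$ are integers, then $g^{k(u_1)+ (i+m_1k)}=g^{ku_1'+i}$ where $u_1'=u_1+m_1$, but since $g$ has order $q-1=kf$ we have $g^{ku_1'+i}=g^{k(u_1'\bmod f)+i}$; hence the condition $1+g^{ku_1+i+m_1k}=g^{ku_2+j+m_2k}$ is literally the same equation in $\mathbb{F}_q$ as $1+g^{ku_1+i}=g^{ku_2+j}$, and reducing the exponents mod $kf$ via Lemma~\ref{lem2.2} shows the counted sets are in bijection. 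So $(i+m_1k,j+m_2k)_k=(i,j)_k$, which also shows the number depends only on $i,j$ modulo $k$ as claimed in Remark~\ref{rem2.1}.

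\textbf{Parts (2) and (3).} For part~(2): given a solution $1+g^{ku_1+i}=g^{ku_2+j}$, multiply through by $g^{-(ku_1+i)}=g^{-ku_1-i}$ to get $g^{-ku_1-i}+1=g^{ku_2+j-ku_1-i}=g^{k(u_2-u_1)+(j-i)}$, i.e. $1+g^{k(-u_1)+(-i)}=g^{k(u_2-u_1)+(j-i)}$. Reducing both exponents modulo $kf$ and invoking part~(1) and Lemma~\ref{lem2.2}, this exhibits a bijection from the pairs counted by $(i,j)_k$ to those counted by $(-i,j-i)_k$; hence $(i,j)_k=(-i,j-i)_k$. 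For part~(3): starting from $1+g^{ku_1+i}=g^{ku_2+j}$, subtract to get $g^{ku_2+j}-g^{ku_1+i}=1$, then factor out $-1$: $g^{ku_1+i}-g^{ku_2+j}=-1$, so $-1=g^{ku_1+i}\bigl(1-g^{k(u_2-u_1)+(j-i)}\bigr)$, which is awkward; instead the cleaner route is to note $-1=g^{(q-1)/2}=g^{kf/2}$ when $f$ is even (an integer exponent), and when $f$ is odd, $q-1=kf$ forces $k$ even so $-1=g^{kf/2}=g^{(k/2)f}\in C_{k/2}$. So from $1+g^{ku_2+j}\cdot(-1)\cdot(-1)$... more directly: rewrite the defining equation as $g^{ku_2+j}+(-1)=g^{ku_1+i}$, i.e. $-1+g^{ku_2+j}=g^{ku_1+i}$. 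Multiplying by $-1$: $1+(-g^{ku_2+j})=-g^{ku_1+i}$, i.e. $1+g^{(q-1)/2+ku_2+j}=g^{(q-1)/2+ku_1+i}$. When $f$ is even, $(q-1)/2=k(f/2)$ is a multiple of $k$, so by part~(1) this says $(j,i)_k=(i,j)_k$; when $f$ is odd, $k$ is even and $(q-1)/2=(k/2)f\equiv (k/2)\cdot f\pmod{k}$, and since $f$ is odd $(k/2)f\equiv k/2\pmod k$, giving $(j+\tfrac{k}{2},i+\tfrac{k}{2})_k=(i,j)_k$. In all cases the map $(u_1,u_2)\mapsto(u_2,u_1)$ composed with the residue reduction of Lemma~\ref{lem2.2} is the required bijection.

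\textbf{Main obstacle.} The routine calculations are all short; the only point requiring care is the bookkeeping of exponents modulo $kf$ versus modulo $k$, and in particular the parity argument that $f$ odd forces $k$ even (so that $k/2$ makes sense in part~(3)) — this follows because $q=kf+1$ is odd, hence $kf$ is even, hence at least one of $k,f$ is even. Once part~(1) is in hand, parts~(2) and~(3) reduce to multiplying the defining relation by a suitable power of $g$ (namely $g^{-(ku_1+i)}$ for~(2) and $g^{(q-1)/2}$ for~(3)) and re-reading the result through Definition~\ref{defn2.1}. I expect no genuine difficulty beyond this indexing care.
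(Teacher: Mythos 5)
Your proposal is correct, but note that the paper itself offers no proof of Lemma~\ref{lem2.3} at all: it is quoted from Storer \cite{[St]} and used as a black box. What you have written is therefore a self-contained substitute for the citation rather than a variant of an argument in the paper, and it is the standard one: part~(1) by reducing exponents modulo $kf$ (shifting $u$ by a multiple of $f$ leaves the field element unchanged, and Lemma~\ref{lem2.2} makes the representative pair unique); part~(2) by multiplying the defining relation $1+g^{ku_1+i}=g^{ku_2+j}$ by $g^{-(ku_1+i)}$, which induces the bijection $(u_1,u_2)\mapsto(-u_1\bmod f,\,u_2-u_1\bmod f)$; part~(3) by multiplying by $-1=g^{kf/2}$ and observing that $kf/2\equiv 0\pmod k$ when $f$ is even while $kf/2\equiv k/2\pmod k$ when $f$ is odd (with the parity remark that $q$ odd forces $k$ even in the latter case, so $k/2$ is an integer). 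All of these steps are sound, and they are exactly the manipulations the paper itself uses later in the same spirit (e.g.\ in the proof of Lemma~\ref{lem2.5}, where $-1=g^{kf/2}$ and Lemma~\ref{lem2.3} are combined). The only cosmetic flaw is the abandoned false start in part~(3) (``which is awkward''), which you should simply delete; the final argument via $1+g^{kf/2+ku_2+j}=g^{kf/2+ku_1+i}$ is the right one and needs no repair.
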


The next lemma provides explicit formulas of
cyclotomic numbers of order $4$ for $\mathbb{F}_q$.

\begin{lem}\label{lem2.4} \cite{[KR],[My2]}
Let $q=p^{m}=4f+1$ with $p$ being an odd prime and
let $g$ be a generator of $\mathbb{F}_{q}^{*}$.
If $p\equiv 3\ \pmod 4$, then let
$s=(-p)^{\frac{m}{2}}$ and $t=0$. If $p\equiv 1\ \pmod 4$,
then $s$ and $t$ are uniquely determined by
\begin{align}\label{st}
q=s^{2}+4t^{2} \ {\it with} \ p\nmid s,
s\equiv 1 \pmod 4 \ {\it and} \
2t\equiv sg^{\frac{3(q-1)}{4}} \pmod p.
\end{align}
Then all the cyclotomic numbers of order $4$ for $\mathbb{F}_q$,
corresponding to $g$, are determined unambiguously
as follows:

For $f$ even, one has
\begin{align*}
&A=(0, 0)_{4}=\frac{1}{16}(q-11-6s),\\
&B=(0, 1)_{4}=(1, 0)_{4}=(3, 3)_{4}=\frac{1}{16}(q-3+2s+8t),\\
&C=(0, 2)_{4}=(2, 0)_{4}=(2, 2)_{4}=\frac{1}{16}(q-3+2s),\\
&D=(0, 3)_{4}=(3, 0)_{4}=(1, 1)_{4}=\frac{1}{16}(q-3+2s-8t),\\
&E=(1, 2)_{4}=(2, 1)_{4}=(1, 3)_{4}=(3, 1)_{4}=(2, 3)_{4}=(3, 2)_{4}=\frac{1}{16}(q+1-2s),
\end{align*}
and for $f$ odd, we have
\begin{align*}
&A=(0, 0)_{4}=(2, 0)_{4}=(2, 2)_{4}=\frac{1}{16}(q-7+2s),\\
&B=(0, 1)_{4}=(1, 3)_{4}=(3, 2)_{4}=\frac{1}{16}(q+1+2s-8t),\\
&C=(0, 2)_{4}=\frac{1}{16}(q+1-6s),\\
&D=(0, 3)_{4}=(1, 2)_{4}=(3, 1)_{4}=\frac{1}{16}(q+1+2s+8t),\\
&E=(1, 0)_{4}=(1, 1)_{4}=(2, 1)_{4}=(2, 3)_{4}=(3, 0)_{4}=(3, 3)_{4}=\frac{1}{16}(q-3-2s).
\end{align*}
\end{lem}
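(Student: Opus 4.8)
The plan is to derive these formulas from the theory of Gauss and Jacobi sums over $\mathbb{F}_q$. First I would fix a multiplicative character $\chi$ of order $4$ determined by $\chi(g)=\sqrt{-1}$, and express each cyclotomic number through the finite Fourier expansion of the indicator of a cyclotomic class. Since $\mathbbm{1}[x\in C_i]=\frac14\sum_{a=0}^{3}\chi^a(x)\bar\chi^a(g^i)$ for $x\neq0$, Remark \ref{rem2.1} gives
\[
(i,j)_4=\frac1{16}\sum_{a,b=0}^{3}\bar\chi^a(g^i)\bar\chi^b(g^j)\sum_{x\neq0,-1}\chi^a(x)\chi^b(x+1).
\]
The inner sums are, up to the degenerate terms $a=0$ or $b=0$ (which produce the ``main term'' proportional to $q$) and a reflection $x\mapsto -x$, the Jacobi sums $J(\chi^a,\chi^b)$. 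Thus the whole problem reduces to evaluating $J(\chi^a,\chi^b)$ for $a,b\in\{0,1,2,3\}$ and then performing the Fourier inversion.

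Next I would evaluate these Jacobi sums. The only genuinely nontrivial one is $J(\chi,\chi)$: because $\chi^2$ is nontrivial one has $|J(\chi,\chi)|=\sqrt q$, and since $J(\chi,\chi)\in\mathbb{Z}[\sqrt{-1}]$ we may write $J(\chi,\chi)=s+2t\sqrt{-1}$ with $q=s^2+4t^2$. The remaining sums $J(\chi^2,\chi^2)$, $J(\chi,\chi^2)$, and so on are expressible through $J(\chi,\chi)$, the quadratic Gauss sum, and the factor $\chi(-1)=(-1)^f$ by the standard Gauss--Jacobi identities \cite{[BEW],[IR]}. The representation $q=s^2+4t^2$ is precisely the splitting of $q$ in $\mathbb{Z}[\sqrt{-1}]$; when $p\equiv3\pmod4$ the prime $p$ is inert, the exponent $m$ is forced to be even, and a Gauss-sum lifting argument (Hasse--Davenport) gives $J(\chi,\chi)=(-p)^{m/2}$, that is $s=(-p)^{m/2}$ and $t=0$, consistent with $(-p)^{m/2}\equiv1\pmod4$.

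The crux, and the step I expect to be the main obstacle, is pinning down the \emph{signs} of $s$ and $t$, i.e.\ the exact value of $J(\chi,\chi)$ rather than merely its modulus. The normalization $s\equiv1\pmod4$ follows from the classical congruence $J(\chi,\chi)\equiv-1\pmod{2(1+\sqrt{-1})}$ in $\mathbb{Z}[\sqrt{-1}]$, while the sign of $t$ depends on the chosen generator $g$. This last ambiguity is resolved by Stickelberger's congruence for Jacobi sums, which ties the residue of $J(\chi,\chi)$ to the quartic residue symbol at $g$ and yields exactly the stated condition $2t\equiv sg^{3(q-1)/4}\pmod p$. This is the content of the Katre--Rajwade determination \cite{[KR]}, and it is where all the delicacy lies.

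Finally, with $J(\chi,\chi)=s+2t\sqrt{-1}$ fixed, I would split into the two cases according to the parity of $f$ (equivalently $q\equiv1$ or $q\equiv5\pmod8$), since $\chi(-1)=(-1)^f$ governs the symmetry relations of Lemma \ref{lem2.3}(3) and hence which of the sixteen cyclotomic numbers coincide. Substituting the Jacobi-sum values into the Fourier formula above, separating real and imaginary parts, and combining the result with the summation relations $\sum_{j}(i,j)_4=f-\theta_i$ and the symmetries of Lemma \ref{lem2.3}, one is left with a small linear system in each parity case whose solution gives the five distinct values $A,B,C,D,E$. The remaining arithmetic is routine once the sign of the Jacobi sum has been fixed.
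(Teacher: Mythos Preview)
The paper does not prove Lemma~\ref{lem2.4}; it is quoted as a known result from Katre--Rajwade \cite{[KR]} and Myerson \cite{[My2]}, with no argument given. Your sketch is correct and is precisely the classical derivation carried out in those references (and in \cite{[BEW]}): expand the class indicators by characters, reduce $(i,j)_4$ to a linear combination of Jacobi sums, write $J(\chi,\chi)=s+2t\sqrt{-1}$, and fix the sign of $t$ via the Stickelberger-type congruence, which is exactly the point of \cite{[KR]}. So there is nothing in the present paper to compare against; your outline reconstructs the proof the authors are citing.
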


Let us now define the generalized cyclotomic number.

\begin{defn}\label{defn2.2}
Let $q=p^{m}$ and $\mathbb{F}_{q}^{*}=\langle g\rangle$.
Let $k, f$ and $n$ be positive integers such that $q-1=kf$.
Let $i_1, ..., i_{n-1}$ and $i_n$ be $n$ integers. Then we define
the {\it dimension $n$ cyclotomic number of order $k$
 with respect to $g$}, denoted by $[i_1,\dots,i_n]_{k}$,
to be the number of $n$-tuples
$(u_1,\dots,u_n)\in\langle f \rangle^n$ satisfying
\begin{equation}\label{eq2.3}
g^{ku_1+i_1}+\cdots +g^{ku_n+i_n}=1.
\end{equation}
\end{defn}

\begin{rem}\label{rem2.2}
Similar to cyclotomic number, the cyclotomic number
\noindent $[i_1, ..., i_n]_{k}$ can also be
viewed as the number of solutions
of the equation
\begin{equation*}
x_1 +\dots +x_n =1\ \ \ (x_j\in C_{i_j},\ 1\leq j\leq n).
\end{equation*}
For any integers $m_1,\ \dots,\ m_n$ and $v$,
since $C_{i+vk}=C_i$, one can deduce that $$[i_1+m_1k,\dots,i_n+m_nk]_{k}=[i_1,\dots,i_n]_{k}.$$
\end{rem}

If $n=1$, then (\ref{eq2.3}) becomes
$g^{ku_1+i_1}=1$ with $0\leq u_1 \leq f-1$,
which is equivalent to the congruence
\begin{equation}\label{eq2.4}
ku_1 \equiv -i_1 \ (\bmod \ q-1)\  \text{with} \ 0\leq u_1 \leq f-1.
\end{equation}
By Corollary \ref{cor2.1}, (\ref{eq2.4}) has
a solution if $i_1\equiv 0\ \pmod k$
and no solution if $i_1\not\equiv 0\ \pmod k$.
Thus
\begin{equation}\label{eq2.5}
[i_1]_{k}=
\begin{cases}
1, & \mbox{if}\ i_1\equiv 0\ \pmod k,\\
0, & \mbox{if}\ i_1\not\equiv 0\ \pmod k.
\end{cases}
\end{equation}
In particular, we have the following result.

\begin{lem}\label{exm2.1}
We have
$[0]_{4}=1$ and $[1]_{4}=[2]_{4}=[3]_{4}=0.$
\end{lem}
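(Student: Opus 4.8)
The statement is the $n=1$, $k=4$ special case of the general formula \eqref{eq2.5}, so the plan is simply to instantiate that formula. First I would recall that by Definition \ref{defn2.2} with $n=1$, the quantity $[i_1]_4$ counts the $u_1\in\langle f\rangle$ with $g^{4u_1+i_1}=1$, which by Lemma \ref{lem2.2} (or directly from $\mathbb{F}_q^*=\langle g\rangle$ being cyclic of order $q-1=4f$) is equivalent to the restricted linear congruence $4u_1\equiv -i_1\pmod{4f}$ with $0\le u_1\le f-1$. Then I would invoke Corollary \ref{cor2.1} with $n=1$, $a_1=4$, $k=4$: here $d=\gcd(4,4f)=4$, the divisibility condition $d\mid b$ becomes $4\mid i_1$, and when it holds the number of restricted solutions is $(kf/d)^{0}=1$.

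Applying this with $i_1\in\{0,1,2,3\}$: for $i_1=0$ we have $4\mid 0$, so $[0]_4=1$; for $i_1\in\{1,2,3\}$ we have $4\nmid i_1$, so the congruence has no solution and $[i_1]_4=0$. This gives exactly $[0]_4=1$ and $[1]_4=[2]_4=[3]_4=0$, as claimed.

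There is no real obstacle here: the lemma is a direct corollary of \eqref{eq2.5}, which has already been derived in the excerpt, and the only thing to check is the elementary arithmetic $\gcd(4,4f)=4$ together with the trivial divisibility facts for $i_1=0,1,2,3$. The one point worth stating explicitly is that $[i_1]_4$ really does count solutions of a congruence of the form covered by Corollary \ref{cor2.1} — i.e. that the exponent condition $g^{4u_1+i_1}=1$ translates to $4u_1\equiv -i_1\pmod{q-1}$ — but this is immediate from the definition of a generator and has effectively been recorded already as \eqref{eq2.4}.
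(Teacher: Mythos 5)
Your proof is correct and matches the paper's own route: the paper derives the general formula \eqref{eq2.5} by translating $g^{ku_1+i_1}=1$ into the restricted congruence \eqref{eq2.4} and applying Lemma \ref{cor2.1}, then states the lemma as the $k=4$ instance, exactly as you do. No gaps; the arithmetic $\gcd(4,4f)=4$ and $(kf/d)^{0}=1$ is handled correctly.
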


The next lemma expresses the dimension 2 cyclotomic number of order $k$
in terms of the cyclotomic number of order $k$.

\begin{lem}\label{lem2.5} We have
$[i_1, i_2]_{k}=(i_2-i_1,\ -i_1)_{k}.$
\end{lem}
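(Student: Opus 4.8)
The plan is to exhibit an explicit bijection between the two sets of solutions being counted. By Remark \ref{rem2.1}, together with Lemma \ref{lem2.2} (which guarantees that every element of a cyclotomic class has a unique representative of the form $g^{ku+i}$ with $u\in\langle f\rangle$, so that counting tuples $(u_1,u_2)$ is the same as counting pairs of elements), the number $[i_1,i_2]_{k}$ equals the number of pairs $(x_1,x_2)\in C_{i_1}\times C_{i_2}$ with $x_1+x_2=1$, while $(i_2-i_1,\,-i_1)_{k}$ equals the number of pairs $(y,z)\in C_{i_2-i_1}\times C_{-i_1}$ with $1+y=z$. Throughout, class indices are read modulo $k$, which is legitimate since $C_{i+vk}=C_i$ for every integer $v$.

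Next I would record the multiplicative behaviour of cyclotomic classes: directly from the definition $C_i=\{g^{i+ku}:u\in\langle f\rangle\}$ one gets $C_i\cdot C_j\subseteq C_{i+j}$ and $C_i^{-1}=C_{-i}$ (indices modulo $k$). With this in hand, define $\varphi(x_1,x_2):=(x_2x_1^{-1},\,x_1^{-1})$. If $x_1\in C_{i_1}$, $x_2\in C_{i_2}$ and $x_1+x_2=1$, then $x_1,x_2\in\mathbb{F}_q^*$, so dividing by $x_1$ yields $1+x_2x_1^{-1}=x_1^{-1}$; moreover $x_2x_1^{-1}\in C_{i_2}\cdot C_{-i_1}=C_{i_2-i_1}$ and $x_1^{-1}\in C_{-i_1}$. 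Hence $\varphi$ carries the solution set counted by $[i_1,i_2]_{k}$ into the one counted by $(i_2-i_1,\,-i_1)_{k}$.

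Finally I would verify that $\psi(y,z):=(z^{-1},\,yz^{-1})$ is a two-sided inverse of $\varphi$: if $1+y=z$ with $y\in C_{i_2-i_1}$ and $z\in C_{-i_1}$, then $z\ne 0$, so $z^{-1}+yz^{-1}=(1+y)z^{-1}=1$, and $z^{-1}\in C_{i_1}$, $yz^{-1}\in C_{i_2-i_1}\cdot C_{i_1}=C_{i_2}$; a routine substitution shows $\psi\circ\varphi$ and $\varphi\circ\psi$ are the identity maps on the respective solution sets. Thus $\varphi$ is a bijection and the two counts coincide, which is the claimed identity. There is no genuine obstacle here beyond bookkeeping; the only points needing care are keeping the class indices consistent modulo $k$ and invoking Lemma \ref{lem2.2} so that the element-level bijection translates faithfully into the tuple-counting definitions of $[\,\cdot\,]_{k}$ and $(\,\cdot\,,\,\cdot\,)_{k}$.
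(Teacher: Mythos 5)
Your proof is correct, but it follows a genuinely different route from the paper. You argue at the level of field elements: using Remarks \ref{rem2.1} and \ref{rem2.2} to identify $[i_1,i_2]_k$ with the count of $(x_1,x_2)\in C_{i_1}\times C_{i_2}$ satisfying $x_1+x_2=1$, you divide through by $x_1$ and check that $(x_1,x_2)\mapsto(x_2x_1^{-1},x_1^{-1})$ is a bijection onto the solutions of $1+y=z$ with $(y,z)\in C_{i_2-i_1}\times C_{-i_1}$, the class bookkeeping resting on $C_iC_j\subseteq C_{i+j}$ and $C_i^{-1}=C_{-i}$; your inverse map and the composition checks are all sound. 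The paper instead works at the level of exponents: it rewrites $g^{ku_1+i_1}+g^{ku_2+i_2}=1$ as $1+g^{ku_1+i_1+\frac{kf}{2}}=g^{ku_2+i_2}$ using $-1=g^{kf/2}$, so that $[i_1,i_2]_k=(i_1+\frac{kf}{2},i_2)_k$, and then invokes the symmetry properties of Lemma \ref{lem2.3}, parts (2) and (3), with a case split according to the parity of $f$ (since $\frac{kf}{2}$ is a multiple of $k$ exactly when $f$ is even). Your argument buys a cleaner, case-free and essentially self-contained proof that makes the identity transparent as a change of variables, and it does not need Lemma \ref{lem2.3} at all; the paper's computation, on the other hand, stays inside the cyclotomic-number calculus (Lemmas \ref{lem2.3} and \ref{lem2.4}) that is reused heavily in Lemmas \ref{lem2.6} and \ref{lem2.7}, so it rehearses exactly the manipulations needed later. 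Either proof is acceptable; just make sure, as you note, that the reduction from element counting to tuple counting is justified by Lemma \ref{lem2.2}, which the paper's remarks already provide.
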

%%%%%%%%%%%%%%%%%%%%%%%%%%%%%%%%%%%%%%%%%%%%%%%%%%%%%%%%%%%%%%%%%%%%%%
\begin{proof}
Letting $n=2$ in (2.4) and noticing that
$g^{\frac{q-1}{2}}=g^{\frac{kf}{2}}=-1$,
one arrives at
\begin{equation*}
1+g^{ku_1+i_1+\frac{kf}{2}}=g^{ku_2+i_2},\ \ \ 0\leq u_1, u_2\leq f-1,
\end{equation*}
which, by the definition of cyclotomic number, has
$(i_1+\frac{kf}{2},\ i_2)_{k}$ solutions $(u_1, u_2)\in\langle f\rangle^2$.
Thus applying Lemma \ref{lem2.3} gives us that
\begin{align}\label{eq2.6}
[i_1,\ i_2]_{k}&=(i_1+\frac{kf}{2},\ i_2)_{k}\notag\\
&=(-i_1-\frac{kf}{2},\ i_2-i_1-\frac{kf}{2})_{k}\notag\\
&=\begin{cases}\notag
(i_2-i_1-\frac{kf}{2},\ -i_1-\frac{kf}{2})_{k}, & \mbox{if }\ f\ {\rm is \ even,}\\
(i_2-i_1-\frac{kf}{2}+\frac{k}{2},\ -i_1-\frac{kf}{2}+\frac{k}{2})_{k},
& \mbox{if }\ f\ {\rm is \ odd}
\end{cases}\\
&=\begin{cases}\notag
(i_2-i_1,\ -i_1)_{k}, & \mbox{if }\ f\ {\rm is \ even,}\\
(i_2-i_1,\ -i_1)_{k}, & \mbox{if }\ f\ {\rm is \ odd}
\end{cases}\\
&=(i_2-i_1,\ -i_1)_{k}\notag
\end{align}
as expected. The proof of Lemma \ref{lem2.5} is complete.
\end{proof}

From Lemma \ref{lem2.3}, Lemma \ref{lem2.4} and Lemma
\ref{lem2.5}, the following result follows immediately.
\begin{lem}\label{exm2.2}
One has
\begin{align*}
&[0,0]_{4}=(0, 0)_{4}=
\begin{cases}
\frac{1}{16}(q-6s-11), & \mbox{if }  q\equiv 1 \pmod 8, \\
\frac{1}{16}(q+2s-7), & \mbox{if }  q\equiv 5 \pmod 8,
\end{cases}
\\
&[1, 1]_{4}=(0, -1)_{4}=(0, 3)_{4}=
\begin{cases}
\frac{1}{16}(q+2s-8t-3), & \mbox{if }  q\equiv 1 \pmod 8, \\
\frac{1}{16}(q+2s+8t+1), & \mbox{if }  q\equiv 5 \pmod 8,
\end{cases}
\\
&[2,2]_{4}=(0, -2)_{4}=(0, 2)_{4}=
\begin{cases}
\frac{1}{16}(q+2s-3), & \mbox{if }  q\equiv 1 \pmod 8, \\
\frac{1}{16}(q-6s+1), & \mbox{if }  q\equiv 5 \pmod 8,
\end{cases}
\\
&[3, 3]_{4}=(0, -3)_{4}=(0, 1)_{4}=
\begin{cases}
\frac{1}{16}(q+2s+8t-3), & \mbox{if }  q\equiv 1 \pmod 8,
\\
\frac{1}{16}(q+2s-8t+1), & \mbox{if }  q\equiv 5 \pmod 8.
\end{cases}
\end{align*}
where $s$ and $t$ are given as in (\ref{st}).
\end{lem}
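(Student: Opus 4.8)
The plan is to reduce each diagonal dimension-$2$ cyclotomic number $[i,i]_4$ to an ordinary cyclotomic number of order $4$ by means of Lemma \ref{lem2.5}, and then simply read off its value from the tables in Lemma \ref{lem2.4}. First I would apply Lemma \ref{lem2.5} with $i_1=i_2=i$ to get $[i,i]_4=(i-i,\,-i)_4=(0,-i)_4$ for $i\in\{0,1,2,3\}$. Then, using the periodicity statement Lemma \ref{lem2.3}(1), I would replace the negative index by its residue modulo $4$, which gives $[0,0]_4=(0,0)_4$, $[1,1]_4=(0,-1)_4=(0,3)_4$, $[2,2]_4=(0,-2)_4=(0,2)_4$, and $[3,3]_4=(0,-3)_4=(0,1)_4$. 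This is exactly the left-hand bookkeeping displayed in the statement.

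Next I would note that, since $q=4f+1$, we have $f=\frac{q-1}{4}$, so $f$ is even precisely when $q\equiv 1\pmod 8$ and $f$ is odd precisely when $q\equiv 5\pmod 8$; this dictates which of the two tables in Lemma \ref{lem2.4} applies. In the case $q\equiv 1\pmod 8$ (so $f$ even), I would substitute $(0,0)_4=A=\frac{1}{16}(q-11-6s)$, $(0,1)_4=B=\frac{1}{16}(q-3+2s+8t)$, $(0,2)_4=C=\frac{1}{16}(q-3+2s)$, and $(0,3)_4=D=\frac{1}{16}(q-3+2s-8t)$ from the even-$f$ table. In the case $q\equiv 5\pmod 8$ (so $f$ odd), I would instead substitute $(0,0)_4=A=\frac{1}{16}(q-7+2s)$, $(0,1)_4=B=\frac{1}{16}(q+1+2s-8t)$, $(0,2)_4=C=\frac{1}{16}(q+1-6s)$, and $(0,3)_4=D=\frac{1}{16}(q+1+2s+8t)$ from the odd-$f$ table. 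Combining these substitutions with the index reductions from the previous paragraph produces all eight asserted formulas.

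There is essentially no obstacle here: the argument is pure bookkeeping. The only point requiring a moment's care is correctly matching the parity of $f$ with the residue of $q$ modulo $8$, and then selecting the right entries ($A$, $B$, $C$, $D$) from the appropriate table of Lemma \ref{lem2.4}; once this correspondence is pinned down, verifying that, for instance, $(0,3)_4=D=\frac{1}{16}(q-3+2s-8t)$ reproduces the claimed value of $[1,1]_4$ for $q\equiv 1\pmod 8$ is immediate, and similarly for the remaining cases. Hence the lemma follows at once from Lemmas \ref{lem2.3}, \ref{lem2.4} and \ref{lem2.5}.
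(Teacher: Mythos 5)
Your proof is correct and matches the paper's approach exactly: the paper derives this lemma directly from Lemmas \ref{lem2.3}, \ref{lem2.4} and \ref{lem2.5} in just the way you describe, using $[i,i]_4=(0,-i)_4$, reducing indices modulo $4$, and reading the values from the even-$f$ (i.e.\ $q\equiv 1\pmod 8$) and odd-$f$ (i.e.\ $q\equiv 5\pmod 8$) tables. All eight substitutions you list agree with the stated formulas.
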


In the next lemma, we determine
$[i_1, i_2, i_3]_{k}$ in terms of cyclotomic numbers
of order $k$.
\begin{lem}\label{lem2.6} Each of the following is true.

{\rm (i)}. For any integers $i_1, i_2$ and $i_3$, we have
\begin{equation*}
[i_1, i_2, i_3]_{k}=\alpha+\sum_{v=0}^{k-1}
(v-i_3, -i_3)_{k}(i_2-i_1, v-i_1)_{k},
\end{equation*}
where
\begin{equation*}
\alpha=
\begin{cases}
f, & \mbox{if} \ \  i_1\equiv i_2+\frac{kf}{2} \pmod k
\  {\it and}\   i_3\equiv 0 \pmod k, \\
0, & \mbox{otherwise}.
\end{cases}
\end{equation*}

{\rm (ii)}. For any integer $i$, we have
\begin{equation*}
[i, i, i]_{k}=\beta+\sum_{v=0}^{k-1}(v, -i)_{k} (0, v)_{k},
\end{equation*}
with
\begin{equation*}
\beta:=
\begin{cases}
f, & \mbox{if }\   i\equiv 0 \pmod k \  and \  f\   is \  even, \\
0, & \mbox{otherwise}.
\end{cases}
\end{equation*}
\end{lem}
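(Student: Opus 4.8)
The plan is to evaluate $[i_1,i_2,i_3]_k$ straight from its definition, i.e. by counting the triples $(x_1,x_2,x_3)\in C_{i_1}\times C_{i_2}\times C_{i_3}$ with $x_1+x_2+x_3=1$ (Remark \ref{rem2.2}), and to split the count according to whether $x_1+x_2=0$ or $x_1+x_2\neq0$; equivalently, whether $x_3=1$ or $x_3\neq1$. The point of this split is that in the second (generic) case the substitution $x_j\mapsto x_j/(1-x_3)$ reduces the inner count to an ordinary (dimension $2$) cyclotomic number, which Lemma \ref{lem2.5} then rewrites in terms of the cyclotomic numbers $(i,j)_k$.

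First I would dispose of the degenerate case $x_1+x_2=0$. Here $x_3=1$, so a contribution is possible only when $1\in C_{i_3}$, i.e. $i_3\equiv0\pmod k$, and then $x_3=1$ is the unique admissible value. Writing $-1=g^{\frac{kf}{2}}$ one has $-C_{i_1}=C_{i_1+\frac{kf}{2}}$, indices read modulo $k$ (Remark \ref{rem2.1}), so $x_2=-x_1$ lies in $C_{i_2}$ for every $x_1\in C_{i_1}$ precisely when $i_1\equiv i_2+\frac{kf}{2}\pmod k$, and then there are exactly $f$ such pairs $(x_1,x_2)$. Hence the degenerate case contributes exactly the quantity $\alpha$.

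Next, in the generic case put $w:=1-x_3=x_1+x_2\in\mathbb{F}_q^*$; let $v\in\langle k\rangle$ be the unique index with $w\in C_v$ (Remark \ref{rem2.1}), and group the triples according to $v$. For a fixed $v$, the number of $x_3\in C_{i_3}$ with $1-x_3\in C_v$ equals the number of pairs $(x_3,y)\in C_{i_3}\times C_v$ with $x_3+y=1$, which is $[i_3,v]_k$ by Remark \ref{rem2.2} (and no triple with $x_3=1$ is recounted here, since $y\neq0$). Moreover, for a fixed nonzero $w\in C_v$ the map $x\mapsto x/w$ is a bijection of $C_{i_1}$ onto $C_{i_1-v}$ and of $C_{i_2}$ onto $C_{i_2-v}$, so dividing $x_1+x_2=w$ by $w$ shows that the number of $(x_1,x_2)\in C_{i_1}\times C_{i_2}$ with $x_1+x_2=w$ equals $[i_1-v,i_2-v]_k$, independently of the chosen $w\in C_v$. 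Since $C_0,\dots,C_{k-1}$ partition $\mathbb{F}_q^*$, summing the product over $v$ yields
\begin{equation*}
[i_1,i_2,i_3]_k=\alpha+\sum_{v=0}^{k-1}[i_3,v]_k\,[i_1-v,i_2-v]_k,
\end{equation*}
and applying Lemma \ref{lem2.5} (so that $[i_3,v]_k=(v-i_3,-i_3)_k$ and $[i_1-v,i_2-v]_k=(i_2-i_1,v-i_1)_k$) gives part (i). Part (ii) is then the specialization $i_1=i_2=i_3=i$: the condition $i_1\equiv i_2+\frac{kf}{2}\pmod k$ collapses to the requirement that $f$ be even, so $\alpha$ becomes $\beta$, and since the summand is periodic in $v$ modulo $k$ (Lemma \ref{lem2.3}(1)), the substitution $v\mapsto v+i$ turns $\sum_v(v-i,-i)_k(0,v-i)_k$ into $\sum_v(v,-i)_k(0,v)_k$.

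There is no real analytic difficulty here; the work is entirely combinatorial bookkeeping. The step I expect to require the most care is isolating the degenerate term correctly: one must check that the triples with $x_3=1$ are counted exactly once, that they are disjoint from the generic ones, and that they contribute $f$ or $0$ according to a congruence involving the parity of $f$ through $-1=g^{\frac{kf}{2}}$; the same parity subtlety reappears when deducing part (ii) from part (i).
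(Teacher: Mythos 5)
Your proof is correct and follows essentially the same route as the paper: the same split according to whether $x_1+x_2=0$ (equivalently $x_3=1$) or $x_1+x_2$ lies in some class $C_v$, the same scaling reduction of the generic case to a product of dimension-two cyclotomic numbers converted via Lemma \ref{lem2.5}, and the same specialization with the shift $v\mapsto v+i$ for part (ii). The only differences are cosmetic: you count the degenerate case via $-C_{i_1}=C_{i_1+\frac{kf}{2}}$ rather than via the congruence count of Lemma \ref{cor2.1}, and you obtain $(v-i_3,-i_3)_k$ directly from $[i_3,v]_k$ instead of passing through $[v,i_3]_k$ and Lemma \ref{lem2.3}(2).
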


\begin{proof}
(i). By the definition of dimension 3 cyclotomic number of order $k$, one has
\begin{align}\label{eq2.8'}
[i_1, i_2, i_3]_{k}&=\sum_{\substack{(u_1, u_2, u_3)\in\langle f\rangle^3\\
g^{ku_1+i_1}+g^{ku_2+i_2}+g^{ku_3+i_3}=1}} 1\notag\\
&=\sum_{\substack{(u_1, u_2, u_3)\in\langle f\rangle^3 \\
g^{ku_1+i_1}+g^{ku_2+i_2}=0\\g^{ku_3+i_3}=1}} 1+\sum_{u=0}^{f-1}
\sum_{v=0}^{k-1}\sum_{\substack{(u_1, u_2, u_3)\in\langle f\rangle^3 \\
g^{ku_1+i_1}+g^{ku_2+i_2}=g^{ku+v}\\
g^{ku+v}+g^{ku_3+i_3}=1}} 1 \notag\\
&:=S_1+S_2.
\end{align}

First of all, we have
\begin{equation*}
S_1=\Bigg(\sum_{\substack{u_3=0\\
g^{ku_3+i_3}=1}}^{f-1}1\Bigg)\Bigg(\sum_{\substack{u_1, u_2=0\\
g^{ku_1+i_1}+g^{ku_2+i_2}=0}}^{f-1}1\Bigg)
=[i_3]_{k}\sum_{\substack{(u_1, u_2)\in\langle f\rangle^2\\
g^{ku_1+i_1}+g^{ku_2+i_2}=0}} 1.
\end{equation*}

Since $g^{\frac{q-1}{2}}=g^{\frac{kf}{2}}=-1$, the equation
$g^{ku_1+i_1}+g^{ku_2+i_2}=0$ can be rewritten in the following
form
\begin{equation*}
g^{k(u_2-u_1)}=-g^{i_1-i_2}=g^{i_1-i_2+\frac{kf}{2}},
\end{equation*}
which is equivalent to the congruence
\begin{equation*}
-ku_1+ku_2\equiv i_1-i_2+\frac{kf}{2} \ (\bmod\ kf).
\end{equation*}

By Lemma \ref{cor2.1}, this congruence has $f$
solutions $(u_1, u_2)\in\langle f\rangle^2$
if $i_1-i_2+\frac{kf}{2}\equiv 0\ \pmod k$,
and $0$ solutions $(u_1, u_2)$, if $k$ does not
divide $i_1-i_2+\frac{kf}{2}$.
Hence, by (\ref{eq2.5}), one gets that
\begin{equation}\label{eq2.8}
S_1=
\begin{cases}
f, & \mbox{if}\ i_1-i_2+\frac{kf}{2}\equiv i_3\equiv 0\ \pmod k, \\
0, & \mbox{otherwise}.
\end{cases}
\end{equation}

Consequently, we calculate $S_2$. By Lemma \ref{lem2.5}
and Lemma \ref{lem2.3} (2), one derives that
\begin{align}\label{eq2.9}
S_2&=\sum_{v=0}^{k-1}\sum_{u=0}^{f-1}\sum_{\substack
{(u_1, u_2, u_3)\in\langle f\rangle^3 \\g^{ku_1+i_1}+g^{ku_2+i_2}=g^{ku+v}
\\g^{ku+v}+g^{ku_3+i_3}=1}} 1\notag\\
&=\sum_{v=0}^{k-1}\sum_{u=0}^{f-1}\sum_{\substack{u_3\in\langle f\rangle
\\g^{ku+v}+g^{ku_3+i_3}=1}}
\sum_{\substack{(u_1, u_2)\in\langle f\rangle^2 \\g^{ku_1+i_1}+g^{ku_2+i_2}=g^{ku+v}}} 1
\notag\\
&=\sum_{v=0}^{k-1}\sum_{u=0}^{f-1}\Big(\sum_{\substack{u_3\in\langle f\rangle
\\g^{ku+v}+g^{ku_3+i_3}=1}} 1\Big)\Big(\sum_{\substack{(u_1, u_2)\in\langle f\rangle^2 \\g^{ku_1+i_1}+g^{ku_2+i_2}=g^{ku+v}}} 1\Big)\notag
\\
&=\sum_{v=0}^{k-1}\sum_{u=0}^{f-1}\Big(\sum_{\substack{u_3\in\langle f\rangle
\\g^{ku+v}+g^{ku_3+i_3}=1}} 1\Big)\Big(\sum_{\substack{(u_1, u_2)\in\langle f\rangle^2 \\g^{k(u_1-u)+i_1-v}+g^{k(u_2-u)+i_2-v}=1}} 1\Big)\notag
\\
&=\sum_{v=0}^{k-1}\sum_{u=0}^{f-1}\Big(\sum_{\substack{u_3\in\langle f\rangle
\\g^{ku+v}+g^{ku_3+i_3}=1}} 1\Big)\Big(\sum_{\substack{(u_1, u_2)\in\langle f\rangle^2 \\g^{ku_1+i_1-v}+g^{ku_2+i_2-v}=1}} 1\Big)\notag
\\
&=\sum_{v=0}^{k-1}\Big(\sum_{\substack{(u_1, u_2)\in\langle f\rangle^2 \\g^{ku_1+i_1-v}+g^{ku_2+i_2-v}=1}} 1\Big)\sum_{u=0}^{f-1}\Big(\sum_{\substack{u_3\in\langle f\rangle
\\g^{ku+v}+g^{ku_3+i_3}=1}} 1\Big)\notag
\\
&=\sum_{v=0}^{k-1}\Big(\sum_{\substack{(u_1, u_2)\in\langle f\rangle^2 \\g^{ku_1+i_1-v}+g^{ku_2+i_2-v}=1}} 1\Big)
\Big(\sum_{\substack{(u,u_3)\in\langle f\rangle^2
\\g^{ku+v}+g^{ku_3+i_3}=1}} 1\Big)\notag
\\
&=\sum_{v=0}^{k-1}[i_1-v, i_2-v]_{k} [v, i_3]_{k}\notag\\
&=\sum_{v=0}^{k-1}(i_2-i_1, v-i_1)_{k}(i_3-v, -v)_{k}\notag\\
&=\sum_{v=0}^{k-1}(i_2-i_1, v-i_1)_{k} (v-i_3, -i_3)_{k}.
\end{align}
Putting (\ref{eq2.8}) and (\ref{eq2.9}) into (\ref{eq2.8'})
gives us the desired result. Part (i) is proved.

(ii). If $i\equiv 0\ \pmod k$ and $f$ is even,
then by part (i), one has
\begin{equation*}
[0, 0, 0]_{k}=f+\sum_{v=0}^{k-1}(v, 0)_{k} (0, v)_{k}.
\end{equation*}

If $i\not\equiv 0\ \pmod k$ or $f$ is odd,
then part (i) tells us that
\begin{equation*}
[i, i, i]_{k}=\sum_{v=0}^{k-1}(v-i, -i)_{k} (0, v-i)_{k}.
\end{equation*}

Since $v-i$ runs over a complete residue system modulo
$k$ as $v$ runs from $0$ to $k-1$, by Lemma \ref{lem2.3} (1),
one can deduce that
\begin{equation*}
[i, i, i]_{k}=\sum_{v=0}^{k-1}(v, -i)_{k} (0, v)_{k}
\end{equation*}
as desired. Part (ii) is proved.

This completes the proof of Lemma \ref{lem2.6}.
\end{proof}

Picking $k=4$ in Lemma \ref{lem2.6}, it follows from
Lemma \ref{lem2.3} (1) and Lemma \ref{lem2.4} that the
following result is true.

\begin{lem}\label{exm2.3}
We have
\begin{align*}
&[0, 0, 0]_{4}=
\begin{cases}
\frac{1}{64}(q^2+14q+4s^2+24s+21), & \mbox{if }  q\equiv 1 \pmod 8, \\
\frac{1}{64}(q^2-6q-4s^2+9), & \mbox{if }  q\equiv 5 \pmod 8,
\end{cases}
\\
&[1, 1, 1]_{4}=
\begin{cases}
\frac{1}{64}(q^2-10q+8st+24t+9), & \mbox{if }  q\equiv 1 \pmod 8, \\
\frac{1}{64}(q^2+2q-8st-24t-3), & \mbox{if }  q\equiv 5 \pmod 8,
\end{cases}
\\
&[2, 2, 2]_{4}=
\begin{cases}
\frac{1}{64}(q^2-6q-4s^2+9), & \mbox{if }  q\equiv 1 \pmod 8, \\
\frac{1}{64}(q^2-6q-16t^2+24s-3), & \mbox{if }  q\equiv 5 \pmod 8,
\end{cases}
\end{align*}
and
\begin{align*}
&[3, 3, 3]_{4}=
\begin{cases}
\frac{1}{64}(q^2-10q-8st-24t+9), & \mbox{if }  q\equiv 1 \pmod 8, \\
\frac{1}{64}(q^2+2q+8st+24t-3), & \mbox{if }  q\equiv 5 \pmod 8,
\end{cases}
\end{align*}
where $s$ and $t$ are as given in (\ref{st}).
\end{lem}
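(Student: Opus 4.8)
The plan is to read off Lemma~\ref{exm2.3} as a direct specialization of Lemma~\ref{lem2.6}~(ii) with $k=4$. By that lemma, $[i,i,i]_4=\beta+\sum_{v=0}^{3}(v,-i)_4(0,v)_4$, where $\beta=f$ when $i\equiv 0\pmod 4$ and $f$ is even, and $\beta=0$ otherwise. Since $q=4f+1$, the parity of $f$ is dictated by $q$ modulo $8$: $f$ is even exactly when $q\equiv 1\pmod 8$ and odd exactly when $q\equiv 5\pmod 8$. I would therefore organize the whole computation around these two cases, because Lemma~\ref{lem2.4} lists the cyclotomic numbers $A,B,C,D,E$ of order $4$ with \emph{different} index identifications in the two parity regimes, and because the correction term $\beta$ is present only in the single case $i=0$, $q\equiv 1\pmod 8$.

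Next I would expand each of the four sums term by term. For $i=0$ this reads $[0,0,0]_4=\beta+(0,0)_4^2+(1,0)_4(0,1)_4+(2,0)_4(0,2)_4+(3,0)_4(0,3)_4$, after reducing the indices $v-i$ modulo $4$ via Lemma~\ref{lem2.3}~(1); for $i\in\{1,2,3\}$ one gets $[i,i,i]_4=\sum_{v=0}^{3}(v,-i)_4(0,v)_4$ with $-i$ taken modulo $4$ (so the relevant first index runs through $3,2,1$ respectively). Then I would substitute the explicit values from Lemma~\ref{lem2.4}, using the $f$-even table when $q\equiv 1\pmod 8$ and the $f$-odd table when $q\equiv 5\pmod 8$, and being careful to apply the symmetry relations of Lemma~\ref{lem2.3} consistently so that each pair $(a,b)_4$ is matched to the correct one of $A,B,C,D,E$.

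What remains is algebraic simplification. After clearing the common denominator $256$, the resulting numerators are quadratic polynomials in $q,s,t$ involving terms such as $q^2$, $qs$, $st$, $s^2$, $t^2$; I would use the defining identity $q=s^{2}+4t^{2}$ (that is, $4t^{2}=q-s^{2}$) to eliminate every occurrence of $t^{2}$, which is precisely the step that forces the cross terms to collapse into the stated closed forms, and in the case $i=0$, $q\equiv 1\pmod 8$ I would add $\beta=f=\tfrac{q-1}{4}$ before collecting terms. Reducing modulo $64$ then yields the displayed formulas for $[0,0,0]_4,[1,1,1]_4,[2,2,2]_4,[3,3,3]_4$. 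I do not expect a conceptual obstacle: the lemma is a corollary, as the paragraph introducing it states. The only real hazard is bookkeeping --- keeping track of which index pair equals which of $A,B,C,D,E$ in each parity case, and carrying out the degree-two arithmetic in $q,s,t$ without sign slips. As a safeguard I would cross-check the four outputs against the small-$n$ solution counts computed in Section~3 before using them.
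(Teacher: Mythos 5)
Your proposal is correct and follows essentially the same route as the paper: specialize Lemma \ref{lem2.6}(ii) to $k=4$, note that $f$ is even precisely when $q\equiv 1\pmod 8$ (so $\beta=f=\frac{q-1}{4}$ only in the case $i=0$, $q\equiv 1\pmod 8$), substitute the order-$4$ cyclotomic numbers from Lemma \ref{lem2.4} in the appropriate parity table, and simplify using $q=s^{2}+4t^{2}$. The paper merely carries out one representative case ($i=1$, $q\equiv 1\pmod 8$) in detail and leaves the rest to analogous computation, which is exactly the bookkeeping you describe.
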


\begin{proof}
 For any integer $i\in \{0,1,2,3\}$, we have
\begin{equation*}
[i, i, i]_{4}=\beta+\sum_{v=0}^{3}(v, -i)_{4} (0, v)_{4},
\end{equation*}
with
\begin{equation*}
\beta:=
\begin{cases}
f, & \mbox{if }\   i\equiv 0 \pmod 4 \  \mbox{and} \
f\  \mbox{is even}, \\
0, & \mbox{otherwise}.
\end{cases}
\end{equation*}

Let $q\equiv 1\pmod 8$ and $i=1$. Then by Lemmas 
\ref{lem2.4} and \ref{lem2.6} (ii), we have
$\beta=0$ and
\begin{align*}
[1, 1, 1]_{4}=&(0, 3)_{4} (0, 0)_{4}+(1, 3)_{4} (0, 1)_{4}
+(2, 3)_{4} (0, 2)_{4}+(3, 3)_{4} (0, 3)_{4}\\
=&\frac{1}{256}\big((q-3+2s-8t)(q-11-6s)+(q+1-2s)(q-3+2s+8t)\\
&+(q+1-2s)(q-3+2s)+(q-3+2s+8t)(q-3+2s-8t)\big)\\
=&\frac{1}{64}(q^2-6q-4s^2-16t^2+8st+24t+9)\\
=&\frac{1}{64}(q^2-10q+8st+24t+9)
\end{align*}
since $q=s^2+4t^2$.

For the other cases, using Lemmas \ref{lem2.4} and \ref{lem2.6}
(ii) and by computations, we can get the desired results.
The proof of Lemma \ref{exm2.4}
is complete.
\end{proof}

In the next lemma, we supply the relation between
$[i_1, i_2, i_3, i_4]_{k}$ and cyclotomic numbers.

\begin{lem}\label{lem2.7} Each of the following is true:

{\rm (i)}. For any integers $i_1, i_2,i_3$ and $i_4$, we have
\begin{equation*}
[i_1, i_2, i_3, i_4]_{k}=\gamma+\sum_{(v_1, v_2)\in\langle k
\rangle^2}(v_2-v_1, -v_1)_{k} (i_2-i_1, v_1-i_1)_{k}
(i_4-i_3, v_2-i_3)_{k},
\end{equation*}
where
\begin{small}
\begin{equation*}
\gamma:=
\begin{cases}
(i_2-i_1, -i_1)_{k} f+(i_4-i_3, -i_3)_{k} f, & \mbox{if }i_2-i_1
\equiv\frac{kf}{2}\pmod k \ \mbox{and}\  i_4-i_3
\equiv\frac{kf}{2} \pmod k, \\
(i_2-i_1, -i_1)_{k} f, & \mbox{if }i_2-i_1\not\equiv\frac{kf}{2}
\pmod k \ \mbox{and}\  i_4-i_3\equiv\frac{kf}{2} \pmod k, \\
(i_4-i_3, -i_3)_{k} f, &\mbox{if }i_2-i_1\equiv\frac{kf}{2}
\pmod k \ \mbox{and}\  i_4-i_3\not\equiv\frac{kf}{2} \pmod k, \\
0, & \mbox{otherwise}.
\end{cases}
\end{equation*}
\end{small}

{\rm (ii)}. For any integer $i$, we have
\begin{equation*}
[i, i, i, i]_{k}=2\theta +\sum_{(v_1, v_2)\in\langle k
\rangle^2}
(v_2-v_1, -v_1)_{k} (0, v_1-i)_{k} (0, v_2-i)_{k},
\end{equation*}
where
\begin{equation*}
\theta=
\begin{cases}
(0, -i)_{k} f , & \mbox{if } \ f\  is \ even, \\
0, & \mbox{if }\  f \ is\  odd.
\end{cases}
\end{equation*}
\end{lem}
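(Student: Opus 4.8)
The plan is to mimic the structure of the proof of Lemma \ref{lem2.6}, but now splitting off \emph{two} terms instead of one, since a sum of four cyclotomic terms equalling $1$ can be regrouped as $(g^{ku_1+i_1}+g^{ku_2+i_2}) + (g^{ku_3+i_3}+g^{ku_4+i_4}) = 1$. First I would introduce, for each pair, an auxiliary variable: write $g^{ku_1+i_1}+g^{ku_2+i_2}=\xi$ and $g^{ku_3+i_3}+g^{ku_4+i_4}=1-\xi$. The key case distinction is whether $\xi=0$ or $1-\xi=0$ (equivalently $\xi=1$), or neither. When $\xi=0$, the first pair contributes a factor counting solutions of $g^{ku_1+i_1}+g^{ku_2+i_2}=0$ — which, exactly as in the proof of Lemma \ref{lem2.6}, is $f$ if $i_2-i_1\equiv\frac{kf}{2}\pmod k$ and $0$ otherwise — while the second pair, with the constraint $g^{ku_3+i_3}+g^{ku_4+i_4}=1$, contributes $[i_3,i_4]_k=(i_4-i_3,-i_3)_k$ by Lemma \ref{lem2.5}. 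Symmetrically for $\xi=1$. These two boundary contributions together furnish the term $\gamma$: note that the ``both'' case in $\gamma$ is simply the sum of the two single-pair boundary contributions, and the $\frac{kf}{2}\pmod k$ conditions match precisely the vanishing conditions just described.

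Next I would handle the generic case $\xi\notin\{0,1\}$, writing $\xi=g^{ku+v_1}$ for a unique $v_1\in\langle k\rangle$ and $u\in\langle f\rangle$, and similarly $1-\xi$; but it is cleaner to parametrize $\xi = g^{ku'+v_1}$ and then observe $1-\xi$ is determined. Actually the natural move, paralleling (\ref{eq2.9}), is: sum over $v_1,v_2\in\langle k\rangle$ and $u\in\langle f\rangle$ with $g^{ku+v_1}$ playing the role of $\xi$, require $g^{ku_1+i_1}+g^{ku_2+i_2}=g^{ku+v_1}$, then the remaining equation $g^{ku+v_1}+g^{ku_3+i_3}+g^{ku_4+i_4}=1$ is itself a dimension-$3$ count once $u,v_1$ are fixed. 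Invoking the translation-by-$u$ trick (multiply through by $g^{-ku}$, reindex $u_j\mapsto u_j-u$, which is a bijection on $\langle f\rangle$) decouples the $u$-sum: the first pair becomes $[i_1-v_1,i_2-v_1]_k$ and the second block becomes $[v_1,i_3,i_4]_k$ after absorbing $u$. Then apply Lemma \ref{lem2.5} to $[i_1-v_1,i_2-v_1]_k=(i_2-i_1,v_1-i_1)_k$ and apply Lemma \ref{lem2.6}(i) to $[v_1,i_3,i_4]_k$, whose $\alpha$-term vanishes here because $\xi=g^{ku+v_1}\neq 0$ forces the relevant congruence to fail (one must check this carefully). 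Expanding Lemma \ref{lem2.6}(i) introduces the second summation variable $v_2$ and the factors $(v_2-v_1,-v_1)_k$ and $(i_4-i_3,v_2-i_3)_k$, giving exactly the stated triple sum.

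For part (ii), I would specialize $i_1=i_2=i_3=i_4=i$. The boundary term: $g^{ku_1+i}+g^{ku_2+i}=0$ is solvable (with $f$ solutions) iff $0\equiv\frac{kf}{2}\pmod k$, i.e. iff $f$ is even, so each pair contributes $(0,-i)_k f$ when $f$ is even and nothing when $f$ is odd; summing the two symmetric contributions gives $2\theta$. For the generic part, set all $i_j=i$ in the triple sum of part (i), use Lemma \ref{lem2.3}(1) to shift the summation indices $v_1\mapsto v_1-i$, $v_2\mapsto v_2-i$ (each runs over a complete residue system mod $k$), turning $(i_2-i_1,v_1-i_1)_k=(0,v_1-i)_k$ into $(0,v_1)_k$ and likewise the $v_2$ factor, while $(v_2-v_1,-v_1)_k$ is unaffected by the common shift — this yields the claimed form after relabeling. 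I expect the main obstacle to be the bookkeeping in the generic case: correctly justifying that applying Lemma \ref{lem2.6}(i) to $[v_1,i_3,i_4]_k$ contributes no $\alpha$-term (so no spurious linear-in-$f$ pieces leak into the sum), and verifying that the three separate sub-cases of $\gamma$ assemble exactly from the two independent pair-vanishing events without double counting — in particular that when $\xi=0$ and simultaneously $1-\xi=0$ is impossible, so the $\xi=0$ and $\xi=1$ boundary cases are genuinely disjoint and additive.
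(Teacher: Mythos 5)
Your boundary analysis (the $\xi=0$ and $\xi=1$ cases giving $\gamma$) coincides with the paper's computation of $T_1$ and $T_2$, but your generic step diverges from the paper: the paper parametrizes \emph{both} pair-sums as $g^{ks_1+v_1}$ and $g^{ks_2+v_2}$ subject to $g^{ks_1+v_1}+g^{ks_2+v_2}=1$, so the third factor appears directly as the dimension-2 number $[v_1,v_2]_k=(v_2-v_1,-v_1)_k$ via Lemma \ref{lem2.5}, with no appeal to Lemma \ref{lem2.6}. Your asymmetric route (absorb $u$ into $[v_1,i_3,i_4]_k$ and expand by Lemma \ref{lem2.6}(i)) can be made to work, but the step you yourself flag as delicate is in fact wrong as stated, and this is a genuine gap.

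Concretely: the identity $\sum_{u\in\langle f\rangle}\sharp\{(u_3,u_4)\in\langle f\rangle^2: g^{ku+v_1}+g^{ku_3+i_3}+g^{ku_4+i_4}=1\}=[v_1,i_3,i_4]_k$ holds only if $u$ runs over all of $\langle f\rangle$, i.e.\ if $\xi=g^{ku+v_1}$ runs over all of $\mathbb{F}_q^*$ \emph{including} $\xi=1$; since you have already counted $\xi=1$ as a boundary contribution, absorbing the unrestricted $u$-sum double counts it. Moreover, the $\alpha$-term does not ``vanish because $\xi\neq 0$'': $\xi\neq 0$ is automatic and irrelevant. After permuting the arguments to $[i_3,i_4,v_1]_k$ (a reordering you need, via Remark \ref{rem2.2}, before Lemma \ref{lem2.6}(i) even produces the factors $(v_2-v_1,-v_1)_k(i_4-i_3,v_2-i_3)_k$ in the stated shape), the $\alpha$-term equals $f$ exactly when $v_1\equiv 0\pmod k$ and $i_4-i_3\equiv\frac{kf}{2}\pmod k$, which is precisely the $\xi=1$ contribution: summed against $(i_2-i_1,v_1-i_1)_k$ it yields the $(i_2-i_1,-i_1)_k\,f$ piece of $\gamma$. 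So either you keep the full $u$-sum, retain the $\alpha$-term, and do \emph{not} add a separate $\xi=1$ boundary, or you restrict to $\xi\neq 1$ and then the restricted sum is $[v_1,i_3,i_4]_k$ minus exactly that $\alpha$-contribution; in your write-up the double counting and the unjustified discarding of $\alpha$ happen to cancel, so the formula comes out right by accident rather than by argument. A smaller slip: in part (ii) the common shift $v_1\mapsto v_1-i$, $v_2\mapsto v_2-i$ does change the factor $(v_2-v_1,-v_1)_k$ (its second entry becomes $-v_1-i$); no shift is needed at all, since setting $i_1=i_2=i_3=i_4=i$ in part (i) already gives the stated formula, with $\gamma=2\theta$ because $\frac{kf}{2}\equiv 0\pmod k$ if and only if $f$ is even.
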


\begin{proof}
(i). By the definition of dimension 4 cyclotomic number of order $k$, one has
\begin{align}\label{eq2.10}
&[i_1, i_2, i_3, i_4]_{k}\notag\\
=&\sum_{\substack{(u_1, u_2, u_3, u_4)\in \langle f\rangle^4
\\g^{ku_1+i_1} + g^{ku_2+i_2} + g^{ku_3+i_3} + g^{ku_4+i_4} = 1}} 1\notag\\
\notag\\
=&\sum_{\substack{(u_1, u_2, u_3, u_4)\in \langle f\rangle^4
\\g^{ku_1+i_1} + g^{ku_2+i_2} = 1\\g^{ku_3+i_3} + g^{ku_4+i_4} = 0}} 1  \
+\sum_{\substack{(u_1, u_2, u_3, u_4)\in \langle f\rangle^4
\\g^{ku_1+i_1} + g^{ku_2+i_2} = 0\\g^{ku_3+i_3} + g^{ku_4+i_4} = 1}} 1
+\sum_{(v_1,v_2)\in \langle k\rangle^2}\sum_{(s_1,s_2)\in
\langle f\rangle^2}\sum_{\substack{(u_1, u_2, u_3, u_4)\in \langle f\rangle^4
\\g^{ku_1+i_1}+g^{ku_2+i_2}=g^{ks_1+v_1}\\g^{ku_3+i_3}+g^{ku_4+i_4}
= g^{ks_2+v_2}\\g^{ks_1+v_1} + g^{ks_2+v_2} = 1}} 1\notag\\
\notag\\
=&:T_1+T_2+T_3.
\end{align}

First of all, applying Lemma \ref{lem2.5} we have
\begin{align*}
T_1=& \sum_{\substack{(u_1, u_2, u_3, u_4)\in \langle f\rangle^4
\\g^{ku_1+i_1} + g^{ku_2+i_2} = 1\\g^{ku_3+i_3} + g^{ku_4+i_4} = 0}} 1
=\Big(\sum_{\substack{(u_1, u_2)\in \langle f\rangle^2\\
g^{ku_1+i_1}+g^{ku_2+i_2}=1}} 1\Big)\Big(\sum_{\substack{(u_3, u_4)\in \langle f\rangle^2\\
g^{ku_3+i_3}+g^{ku_4+i_4} = 0}} 1\Big)\\
=& [i_1, i_2]_{k}\sum_{\substack{(u_3, u_4)\in \langle f\rangle^2\\
g^{ku_3+i_3}+g^{ku_4+i_4} = 0}} 1\\
=& (i_2-i_1, -i_1)_{k}\sum_{\substack{(u_3, u_4)
\in\langle f\rangle^2\\g^{ku_3+i_3} + g^{ku_4+i_4} = 0}} 1.
\end{align*}
Since $g^{\frac{kf}{2}}=-1$, $g^{ku_3+i_3}+g^{ku_4+i_4}=0$ is equivalent to
\begin{equation*}
g^{ku_4+i_4}=-g^{ku_3+i_3}=g^{ku_3+i_3+\frac{kf}{2}},
\end{equation*}
which is true if and only if
\begin{equation*}
k(u_4-u_3)\equiv (i_3-i_4)+\frac{kf}{2} \pmod {q-1}.
\end{equation*}
By Lemma \ref{cor2.1}, this congruence has $f$ or $0$
solutions $(u_3, u_4)$ according to $k$ dividing
$(i_3-i_4)+\frac{kf}{2}$ or not. Thus
\begin{equation}\label{eq2.11}
\sum_{\substack{(u_3, u_4)
\in\langle f\rangle^2\\g^{ku_3+i_3} + g^{ku_4+i_4} = 0}} 1
=\begin{cases}
f, & \mbox{if } i_3-i_4+\frac{kf}{2}\equiv 0 \pmod k, \\
0, & \mbox{otherwise},
\end{cases}
\end{equation}
and so
\begin{equation}\label{eq2.11}
T_1=
\begin{cases}
(i_2-i_1, -i_1)_{k} f, & \mbox{if } i_3-i_4+\frac{kf}{2}\equiv 0 \pmod k, \\
0, & \mbox{otherwise}.
\end{cases}
\end{equation}

By the symmetry, one can exchange $i_1$ and $i_3$, and
exchange $i_2$ and $i_4$ in (\ref{eq2.11}), and arrives at
\begin{equation}\label{eq2.12}
T_2=
\begin{cases}
(i_4-i_3, -i_3)_{k} f, & \mbox{if } i_1-i_2+\frac{kf}{2}\equiv 0 \pmod k, \\
0, & \mbox{otherwise}.
\end{cases}
\end{equation}

Now we turn to the evaluation of $T_3$. We can compute that
\begin{align}\label{eq2.13}
T_3&=\sum_{(v_1,v_2)\in \langle k\rangle^2}\sum_{(s_1,s_2)\in \langle f\rangle^2}\sum_{\substack{(u_1, u_2, u_3, u_4)\in \langle f\rangle^4
\\g^{ku_1+i_1} + g^{ku_2+i_2} = g^{ks_1+v_1}\\g^{ku_3+i_3} + g^{ku_4+i_4} = g^{ks_2+v_2}\\g^{ks_1+v_1} + g^{ks_2+v_2} = 1}} 1\notag\\
&=\sum_{(v_1, v_2)\in \langle k\rangle^2}\sum_{\substack{(s_1, s_2)\in \langle f\rangle^2\\g^{ks_1+v_1} + g^{ks_2+v_2} = 1}}  \
\sum_{\substack{(u_1, u_2, u_3, u_4)\in \langle f\rangle^4\\g^{ku_1+i_1} + g^{ku_2+i_2} = g^{ks_1+v_1}\\
g^{ku_3+i_3} + g^{ku_4+i_4} = g^{ks_2+v_2}}}  1\notag\\
\notag\\
&=\sum_{(v_1, v_2)\in \langle k\rangle^2}\sum_{\substack{(s_1, s_2)\in \langle f\rangle^2\\g^{ks_1+v_1} + g^{ks_2+v_2} = 1}}  \
\sum_{\substack{(u_1, u_2, u_3, u_4)\in \langle f\rangle^4
\\g^{k(u_1-s_1)+(i_1-v_1)} + g^{k(u_2-s_1)+(i_2-v_1)} = 1\\
g^{k(u_3-s_2)+(i_3-v_2)} + g^{k(u_4-s_2)+(i_4-v_2)} = 1}} 1\notag\\
\notag\\
&=\sum_{(v_1, v_2)\in \langle k\rangle^2}\sum_{\substack{(s_1, s_2)\in \langle f\rangle^2\\g^{ks_1+v_1} + g^{ks_2+v_2} = 1}}  \
\sum_{\substack{(u_1, u_2, u_3, u_4)\in \langle f\rangle^4
\\g^{ku_1+(i_1-v_1)} + g^{ku_2+(i_2-v_1)} = 1\\
g^{ku_3+(i_3-v_2)} + g^{ku_4+(i_4-v_2)} = 1}} 1\notag\\
\notag\\
&=\sum_{(v_1, v_2)\in \langle k\rangle^2}
\sum_{\substack{(s_1, s_2)\in \langle f\rangle^2\\g^{ks_1+v_1} + g^{ks_2+v_2} = 1}}
\Big(\sum_{\substack{(u_1, u_2)\in \langle f\rangle^2
\\g^{ku_1+(i_1-v_1)} + g^{ku_2+(i_2-v_1)}=1}} 1
\Big)\Big(\sum_{\substack{(u_3, u_4)\in \langle f\rangle^2
\\g^{ku_3+(i_3-v_2)} + g^{ku_4+(i_4-v_2)}=1}} 1\Big)\notag\\
\notag\\
&=\sum_{(v_1, v_2)\in \langle k\rangle^2}
\Big(\sum_{\substack{(u_1, u_2)\in \langle f\rangle^2
\\g^{ku_1+(i_1-v_1)} + g^{ku_2+(i_2-v_1)}=1}} 1
\Big)\Big(\sum_{\substack{(u_3, u_4)\in \langle f\rangle^2
\\g^{ku_3+(i_3-v_2)} + g^{ku_4+(i_4-v_2)}=1}} 1\Big)
\sum_{\substack{(s_1, s_2)\in \langle f\rangle^2\\
g^{ks_1+v_1}+g^{ks_2+v_2} = 1}} 1\notag\\
&=\sum_{(v_1, v_2)\in \langle k\rangle^2 }
[i_1-v_1, i_2-v_1]_{k} [i_3-v_2, i_4-v_2]_{k}[v_1, v_2]_{k} \notag\\
&=\sum_{(v_1, v_2)\in \langle k\rangle^2 }(i_2-i_1, v_1-i_1)_{k}
(i_4-i_3, v_2-i_3)_{k}(v_2-v_1, -v_1)_{k},
\end{align}
where Lemma \ref{lem2.5} is applied in the last step.
Putting (\ref{eq2.11}) to (\ref{eq2.13}) into (\ref{eq2.10})
gives us the desired result.
This finishes the proof of part (i).

(ii). Letting $i_1=i_2=i_3=i_4=i$ in part (i), one obtains that
\begin{equation*}
[i, i, i, i]_{k}=2\theta+\sum_{(v_1, v_2)\in \langle k\rangle^2}
(v_2-v_1, -v_1)_{k}(0, v_1-i)_{k} (0, v_2-i)_{k},
\end{equation*}
where
\begin{equation*}
\theta=
\begin{cases}
(0, -i)_{k} f, & \mbox{if} \ f \ \mbox{is even}, \\
0, & \mbox{if} \ f \ \mbox{is odd}
\end{cases}
\end{equation*}
as required. The proof of Lemma \ref{lem2.7} is complete.
\end{proof}

Letting $k=4$ in Lemma \ref{lem2.7}, by Lemma \ref{lem2.3} (1)
and Lemma \ref{lem2.4}, the following result follows immediately.

\begin{lem}\label{exm2.4} We have
\begin{small}
\begin{align*}
&[0, 0, 0, 0]_{4}=
\begin{cases}
\frac{1}{256}\big(q^3-4q^2-79q-60sq-20s^2
      -60s-34\big), & \mbox{if }  q\equiv 1 \pmod 8, \\
\frac{1}{256}\big(q^3-4q^2-28sq+25q+12s^2
      -12s-10\big), & \mbox{if }  q\equiv 5 \pmod 8,
\end{cases}
\\
&[1, 1, 1, 1]_{4}=
\begin{cases}
\frac{1}{256}\big(q^3-4q^2+20sq-48tq+13q
      -32st-12s+16t^2-48t-18\big), &\!\! \mbox{if }  q\equiv 1 \pmod 8, \\
\frac{1}{256}\big(q^3-4q^2+4sq-16tq-11q
      +32st-12s+16t^2+48t+6\big), & \!\!\mbox{if }  q\equiv 5 \pmod 8,
\end{cases}
\\
&[2, 2, 2, 2]_{4}=
\begin{cases}
\frac{1}{256}\big(q^3-4q^2+20sq+13q
      -12s-48t^2-18\big), & \mbox{if }  q\equiv 1 \pmod 8, \\
\frac{1}{256}\big(q^3-4q^2+20sq+21q
      -60s+80t^2+6\big), & \mbox{if }  q\equiv 5 \pmod 8,
\end{cases}
\\
&[3, 3, 3, 3]_{4}=
\begin{cases}
\frac{1}{256}\big(q^3-4q^2+20sq+48tq+13q
      +32st-12s+16t^2+48t-18\big), &\!\! \mbox{if }  q\equiv 1 \pmod 8, \\
\frac{1}{256}\big(q^3-4q^2+4sq+16tq-11q
      -32st-12s+16t^2-48t+6\big), & \!\!\mbox{if }  q\equiv 5 \pmod 8,
\end{cases}
\end{align*}
\end{small}
where $s$ and $t$ are given as in (\ref{st}).
\end{lem}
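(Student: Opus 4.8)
The plan is to specialize Lemma~\ref{lem2.7}(ii) to the case $k=4$ and then substitute the explicit values of the order-$4$ cyclotomic numbers furnished by Lemma~\ref{lem2.4}, exactly in the spirit of the proof of Lemma~\ref{exm2.3}. For each $i\in\{0,1,2,3\}$ Lemma~\ref{lem2.7}(ii) gives
\[
[i,i,i,i]_4=2\theta+\sum_{(v_1,v_2)\in\langle 4\rangle^2}(v_2-v_1,\,-v_1)_4\,(0,\,v_1-i)_4\,(0,\,v_2-i)_4,
\]
where $\theta=(0,-i)_4\,f$ when $f$ is even and $\theta=0$ when $f$ is odd. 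Since $q=4f+1$, the parity condition ``$f$ even'' is precisely $q\equiv 1\pmod 8$ and ``$f$ odd'' is precisely $q\equiv 5\pmod 8$; this is why the two congruence classes are listed separately in the statement, and it also dictates which of the two cyclotomic tables in Lemma~\ref{lem2.4} is in force.

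First I would reduce, via Lemma~\ref{lem2.3}(1), every index occurring in the three cyclotomic factors of each of the $16$ summands to its residue modulo $4$, so that each summand becomes a product of three entries drawn from $\{A,B,C,D,E\}$. It is convenient to group the double sum according to the value of $v_2-v_1 \bmod 4$, since the factor $(v_2-v_1,-v_1)_4$ is the ``coupling'' term, and to note that as $v_1$ ranges over $\langle 4\rangle$ the factor $(0,v_1-i)_4$ runs over a permutation of $(0,0)_4,(0,1)_4,(0,2)_4,(0,3)_4$. After substituting the closed forms of Lemma~\ref{lem2.4}, performing the arithmetic, and adding the contribution $2\theta=\tfrac{q-1}{2}(0,-i)_4$ (present only in the $f$-even case), I would finally use the identity $q=s^2+4t^2$ to rewrite combinations such as $4s^2+16t^2$ as $4q$; this is the step that collapses the degree-two-in-$s,t$ debris into the clean polynomials in $q,s,t$ displayed in the lemma.

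As a sample, for $q\equiv 1\pmod 8$ and $i=1$ one has $\theta=(0,3)_4 f$, and the double sum, once its indices are reduced modulo $4$ and like products are merged, becomes an explicit sum of products of the form $(\ast,\ast)_4(0,\ast)_4(0,\ast)_4$ taken from the $f$-even table; combining with $2\theta$ and simplifying through $q=s^2+4t^2$ should produce $\frac1{256}\bigl(q^3-4q^2+20sq-48tq+13q-32st-12s+16t^2-48t-18\bigr)$. The other seven instances ($i=0,2,3$ for $q\equiv1\pmod8$ and $i=0,1,2,3$ for $q\equiv5\pmod8$) are handled in exactly the same way.

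I expect the sole real obstacle to be the bookkeeping: there are eight instances, each an explicit sum of sixteen triple products of linear forms in $q,s,t$, so the computation is long and error-prone even though every individual manipulation is elementary. The secondary point requiring care is keeping the two cyclotomic tables straight, since the labels $A,\dots,E$ are attached to different index pairs depending on the parity of $f$, and ensuring that the $\theta$-correction is inserted only when $q\equiv1\pmod8$.
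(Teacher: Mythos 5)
Your proposal is correct and matches the paper's own proof, which likewise obtains each $[i,i,i,i]_4$ by specializing Lemma~\ref{lem2.7}(ii) to $k=4$, substituting the order-$4$ cyclotomic numbers of Lemma~\ref{lem2.4} (with the $f$-even table for $q\equiv1\pmod 8$ and the $f$-odd table for $q\equiv5\pmod 8$, and the $2\theta$ term only in the former case), and simplifying via $q=s^2+4t^2$. The paper merely states this computation without writing out the sixteen triple products, so your outline is, if anything, more explicit than the published argument.
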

\begin{proof}
One can use Lemmas
\ref{lem2.4} and \ref{lem2.7} (ii) and some computations
to get the desired results. The proof of Lemma \ref{exm2.4}
is complete.
\end{proof}

\section{Values of $N_n(c)$ for $1\le n\le 4$}
In this section, we compute $N_n(c)$ for $1\le n\le 4$.
We begin with the following lemma which expresses the number
of solutions of a diagonal equation in terms of dimension
$n$ cyclotomic number of order $k$.

\begin{lem}\label{lem2.8}
Let $c\in\mathbb{F}_q^*=\langle g\rangle$ and $q-1=kf$.
Let ${N'}_{n}^{(k)}(c)$ denote the number of zeros
$(x_1, ..., x_n)\in(\mathbb{F}_q^*)^n$ of the diagonal
equation
\begin{equation}\label{eq2.14}
x_1^k+\cdots +x_n^k=c.
\end{equation}
Then
\begin{align*}
{N'}_{n}^{(k)}(c)=k^n \big[\underbrace{\ind_g\Big(\frac{1}{c}\Big), ...,\ind_g\Big(\frac{1}{c}\Big)}_{n \ \mbox{times}}\big]_{k}.
\end{align*}
\end{lem}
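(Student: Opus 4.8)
The plan is to push the equation $x_1^k+\dots+x_n^k=c$ down to the index set $\langle f\rangle^n$, at the cost of a uniform multiplicity $k^n$, and then to recognize what remains as a dimension $n$ cyclotomic number of order $k$ after a trivial shift of the indices.

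First I would parametrize each variable as $x_i=g^{a_i}$ with $a_i\in\langle q-1\rangle$ and set $b_i:=a_i\bmod f$. Since $q-1=kf$ we have $ka_i\equiv kb_i\pmod{q-1}$, hence $x_i^k=g^{ka_i}=g^{kb_i}$; thus $x_i^k$ depends only on $b_i$. Conversely, for a fixed $b_i\in\langle f\rangle$ the congruence $ka_i\equiv kb_i\pmod{q-1}$ has exactly $k$ solutions $a_i\in\langle q-1\rangle$ by Lemma \ref{lem2.1} (equivalently, $x\mapsto x^k$ is $k$-to-one on the cyclic group $\mathbb{F}_q^*$, because $k\mid q-1$). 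Hence $(x_1,\dots,x_n)\mapsto(b_1,\dots,b_n)$ restricts to an exactly $k^n$-to-one surjection from the solution set of $x_1^k+\dots+x_n^k=c$ in $(\mathbb{F}_q^*)^n$ onto $\{(b_1,\dots,b_n)\in\langle f\rangle^n:g^{kb_1}+\dots+g^{kb_n}=c\}$, so that
\begin{equation*}
{N'}_{n}^{(k)}(c)=k^n\,\sharp\big\{(b_1,\dots,b_n)\in\langle f\rangle^n:\ g^{kb_1}+\dots+g^{kb_n}=c\big\}.
\end{equation*}

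Next I would normalize by dividing through by $c$. Writing $\ell:=\ind_g(c)$, so $c=g^{\ell}$ and $\ind_g(1/c)\equiv-\ell\pmod{q-1}$, let $\ind_g(1/c)=k\nu+i_0$ with $\nu\in\langle f\rangle$ and $i_0\in\langle k\rangle$; then $g^{-\ell}=g^{k\nu+i_0}$. Multiplying $g^{kb_1}+\dots+g^{kb_n}=g^{\ell}$ by $g^{-\ell}$ yields $g^{k(b_1+\nu)+i_0}+\dots+g^{k(b_n+\nu)+i_0}=1$. Since $b\mapsto(b+\nu)\bmod f$ permutes $\langle f\rangle$, the substitution $b_i':=(b_i+\nu)\bmod f$ shows the cardinality above equals $\sharp\{(b_1',\dots,b_n')\in\langle f\rangle^n:g^{kb_1'+i_0}+\dots+g^{kb_n'+i_0}=1\}$, which by Definition \ref{defn2.2} is exactly $[\,i_0,\dots,i_0\,]_k$ ($n$ entries). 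Finally, as $i_0\equiv\ind_g(1/c)\pmod k$, Remark \ref{rem2.2} gives $[\,i_0,\dots,i_0\,]_k=[\,\ind_g(1/c),\dots,\ind_g(1/c)\,]_k$, and substituting into the displayed formula for ${N'}_{n}^{(k)}(c)$ finishes the proof.

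This is essentially a bookkeeping lemma, so I do not anticipate a real obstacle; the only point needing care is juggling the three relevant moduli — exponents are taken modulo $q-1=kf$, the fibre-size count uses residues modulo $f$, and the identification of the cyclotomic number uses residues modulo $k$ — which is why I would record the two reductions as explicit equalities of sets rather than tracking a single running index. Nothing deeper than Lemma \ref{lem2.1}, Definition \ref{defn2.2}, and Remark \ref{rem2.2} (together with the elementary fact that $x\mapsto x^k$ is $k$-to-one when $k\mid q-1$) is required.
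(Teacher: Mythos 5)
Your proof is correct and takes essentially the same route as the paper: both arguments use that $x\mapsto x^k$ is $k$-to-one on $\mathbb{F}_q^*$ (equivalently, each residue mod $f$ has $k$ lifts mod $q-1$) to extract the factor $k^n$, and normalize the equation by $1/c$ to identify the remaining count with $\big[\ind_g(1/c),\dots,\ind_g(1/c)\big]_k$. The only difference is the order of the two steps (you reduce mod $f$ first and divide by $c$ second, which costs you the shift-by-$\nu$ bookkeeping and an appeal to Remark \ref{rem2.2}, whereas the paper divides by $c$ first), and this is immaterial.
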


\begin{proof}
Set $i=\ind_g(\frac{1}{c})$. Then we can rewrite (\ref{eq2.14})
in the following form
\begin{equation*}
g^{k\ind_g(x_1)+i}+\cdots +g^{k\ind_g(x_n)+i}=1.
\end{equation*}
Hence
\begin{align*}
{N'}_{n}^{(k)}(c)=\sharp\{(u_1, ..., u_n)\in\langle q\rangle^n:
g^{ku_1+i}+\cdots + g^{ku_n+i}=1\}.
\end{align*}

Since $q-1=kf$, $ku\equiv kv \pmod {q-1}$ if and only if
$u\equiv v\pmod f$. So $g^{ku+i}=g^{k(u+jf)+i}$ for all
integers $j$ with $0\leq j\leq k-1$. Therefore
\begin{equation*}
{N'}_{n}^{(k)}(c)=\sharp\{(u_1, ..., u_n)\in\langle f\rangle^n:
g^{k(u_1+j_1f)+i}+\cdots + g^{k(u_n+j_nf)+i}=1\}
\end{equation*}
for all $0\leq j_1, \dots, j_n \leq k-1$. For any
$j_r (1\leq r\leq n)$, there exists $k$ choices,
and so by the definition of dimension $n$ cyclotomic
numbers of order $k$, one can obtain that
\begin{align*}
{N'}_{n}^{(k)}(c)=& k^n\sharp\{(u_1, ..., u_n)\in\langle f\rangle^n:
g^{ku_1+i}+\cdots + g^{ku_n+i}=1\}\\
=& k^n[\underbrace{i, \dots, i}_{n\ \rm{times}}]_{k}
= k^n\big[\underbrace{\ind_g\Big(\frac{1}{c}\Big), ...,
\ind_g\Big(\frac{1}{c}\Big)}_{n\ \rm{times}}\big]_{k}
\end{align*}
as desired.

This concludes the proof of Lemma \ref{lem2.8}.
\end{proof}

For any $c\in\mathbb{F}_q^*$, the next lemma exhibits
the relation between $N_n^{(k)}(c)$ and $N_n^{(k)}(g^a)$,
where $a\equiv \ind_g(c) \pmod k$.

\begin{lem}\label{lem2.9}
Let $c\in\mathbb{F}_q^*=\langle g\rangle$ and
$\ind_g(c)\equiv a\pmod k$ with $a\in\langle k\rangle$.
Then $N_n^{(k)}(c)=N_n^{(k)}(g^a)$.
\end{lem}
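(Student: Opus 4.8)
The plan is to exhibit an explicit bijection between the solution set of $x_1^k+\dots+x_n^k=c$ and the solution set of $x_1^k+\dots+x_n^k=g^a$, obtained by scaling each coordinate by a fixed element of $\mathbb{F}_q^*$. Write $\ind_g(c)=a+k\ell$ for some integer $\ell$, so that $c=g^a\cdot g^{k\ell}=g^a(g^\ell)^k$. Set $\lambda:=g^\ell\in\mathbb{F}_q^*$. Then $c=g^a\lambda^k$, and the substitution $x_j\mapsto \lambda^{-1}x_j$ (equivalently $y_j:=\lambda^{-1}x_j$) is a bijection of $\mathbb{F}_q^n$ onto itself.

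Under this substitution, $x_1^k+\dots+x_n^k=c$ becomes $\lambda^k(y_1^k+\dots+y_n^k)=c=g^a\lambda^k$, and dividing by $\lambda^k\neq 0$ gives $y_1^k+\dots+y_n^k=g^a$. Thus the map $(x_1,\dots,x_n)\mapsto(\lambda^{-1}x_1,\dots,\lambda^{-1}x_n)$ is a bijection from the zero set of $x_1^k+\dots+x_n^k-c$ onto the zero set of $y_1^k+\dots+y_n^k-g^a$, and counting cardinalities on both sides yields $N_n^{(k)}(c)=N_n^{(k)}(g^a)$.

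I do not anticipate any real obstacle here; the only point requiring a word of care is the choice of the integer $\ell$, which exists precisely because $\ind_g(c)\equiv a\pmod k$, and the observation that multiplication by the fixed nonzero scalar $\lambda^{-1}$ permutes $\mathbb{F}_q^n$. One could equally phrase the argument without introducing $\lambda$ explicitly, simply noting that as $x$ ranges over $\mathbb{F}_q$ so does $\mu x$ for any $\mu\in\mathbb{F}_q^*$, and that $(\mu x)^k=\mu^k x^k$; choosing $\mu^k=c\,g^{-a}$ (possible since $c\,g^{-a}=g^{k\ell}$ is a $k$-th power) transforms one equation into the other. This completes the proof of Lemma \ref{lem2.9}.
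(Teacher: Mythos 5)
Your proof is correct and is essentially the paper's own argument: the paper also writes $c=g^{ku+a}$ and rescales each coordinate by $g^{-u}$ (your $\lambda^{-1}$), turning the equation with right-hand side $c$ into the one with right-hand side $g^a$. You merely spell out the bijection and the choice of $\ell$ a bit more explicitly, which is fine.
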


\begin{proof}
Since $a\equiv \ind_g(c) \pmod k$, one has
$c=g^{\ind_g(c)}=g^{ku+a}$ for some integer $u$.
Then $x_1^k +\dots +x_n^k =c$ can be rewritten as
$(g^{-u}x_1)^k +\dots +(g^{-u}x_n)^k=g^a$.
Therefore $N_n^{(k)}(c)=N_n^{(k)}(g^a)$ as desired.

This finishes the proof of Lemma \ref{lem2.9}.
\end{proof}

In the next lemma, we will derive the number of zeros of
diagonal quartic form by the cyclotomic theory. These give
the initial values of $N_n(c)$ with $c\in\mathbb{F}_q^*$.

\begin{lem}\label{lem2.10}
Let $\mathbb{F}_q$ be the finite field of $q=p^m\equiv 1\pmod 4$
elements with $p$ being an odd prime and $\mathbb{F}_q^*=
\langle g\rangle$. Let $c\in\mathbb{F}_q^*$ and let $N_n(c)$
be the number of zeros of the following diagonal quartic equation
\begin{equation*}
x_1^4+...+x_n^4=c.
\end{equation*}
Then each of the following is true:

{\rm (i)}.
\begin{equation*}
N_1(c)=
\begin{cases}
4, & \mbox{if } \ind_g(c)\equiv 0 \pmod  4, \\
0, & \mbox{if } \ind_g(c)\not\equiv 0\pmod  4.
\end{cases}
\end{equation*}

{\rm (ii)}.
\begin{equation*}
N_2(c)=q+
\begin{cases}
-3+\epsilon_1(c), & \mbox{if } q\equiv 1\pmod  8, \\
1+\epsilon_2(c), & \mbox{if } q\equiv 5\pmod  8,
\end{cases}
\end{equation*}
where
\small
\begin{equation}\label{eps2}
\epsilon_1(c)=
\begin{cases}
-6s, & \mbox{if } \ind_g(c)\equiv 0\ \pmod 4, \\
2s+8t, & \mbox{if } \ind_g(c)\equiv 1\ \pmod 4, \\
2s, & \mbox{if } \ind_g(c)\equiv 2\ \pmod 4, \\
2s-8t, & \mbox{if } \ind_g(c)\equiv 3\ \pmod 4,
\end{cases}
~~~~%\text{~~and~~}
\epsilon_2(c)=
\begin{cases}
2s, & \mbox{if } \ind_g(c)\equiv 0\ \pmod 4, \\
2s-8t, & \mbox{if } \ind_g(c)\equiv 1\ \pmod 4, \\
-6s, & \mbox{if } \ind_g(c)\equiv 2\ \pmod 4, \\
2s+8t, & \mbox{if } \ind_g(c)\equiv 3\ \pmod 4.
\end{cases}
\end{equation}
\small

{\rm (iii)}.
\begin{equation*}
N_3(c)=q^2+6s+
\begin{cases}
\epsilon_3(c), & \mbox{if } q\equiv 1\ \pmod 8,\\
\epsilon_4(c), & \mbox{if } q\equiv 5\ \pmod 8,
\end{cases}
\end{equation*}
where
\small\begin{equation}\label{eps3}
\epsilon_3(c)=
\begin{cases}
17q+4s^2, & \mbox{if } \ind_g(c)\equiv 0\ \pmod 4, \\
-7q-8st, & \mbox{if } \ind_g(c)\equiv 1\ \pmod 4, \\
-7q+16t^2, & \mbox{if } \ind_g(c)\equiv 2\ \pmod 4, \\
-7q+8st, & \mbox{if } \ind_g(c)\equiv 3\ \pmod 4,
\end{cases}
~~
\epsilon_4(c)=
\begin{cases}
-3q-4s^2, & \mbox{if } \ind_g(c)\equiv 0\ \pmod 4, \\
5q+8st, & \mbox{if } \ind_g(c)\equiv 1\ \pmod 4, \\
-3q-16t^2, & \mbox{if } \ind_g(c)\equiv 2\ \pmod 4, \\
5q-8st, & \mbox{if } \ind_g(c)\equiv 3\ \pmod 4.
\end{cases}
\end{equation}
\small

{\rm (iv)}.
\begin{equation*}
N_4(c)=q^3-4s^2+
\begin{cases}
\epsilon_5(c)-17q, & \mbox{if } q\equiv 1\ \pmod 8,\\
\epsilon_6(c)+7q, & \mbox{if } q\equiv 5\ \pmod 8,
\end{cases}
\end{equation*}
where
\small
\begin{equation}\label{eps4}
\epsilon_5(c)=
\begin{cases}
-60sq, & \mbox{if } \ind_g(c)\equiv 0\ \pmod 4, \\
20sq+48tq, & \mbox{if } \ind_g(c)\equiv 1\ \pmod 4, \\
20sq, & \mbox{if } \ind_g(c)\equiv 2\ \pmod 4, \\
20sq-48tq, & \mbox{if } \ind_g(c)\equiv 3\ \pmod 4,
\end{cases}
~~~~
\epsilon_6(c)=
\begin{cases}
-28sq, & \mbox{if } \ind_g(c)\equiv 0\ \pmod 4, \\
4sq+16tq, & \mbox{if } \ind_g(c)\equiv 1\ \pmod 4, \\
20sq, & \mbox{if } \ind_g(c)\equiv 2\ \pmod 4, \\
4sq-16tq, & \mbox{if } \ind_g(c)\equiv 3\ \pmod 4,
\end{cases}
\end{equation}
\small
with $s$ and $t$ are given as in (\ref{st}).
\end{lem}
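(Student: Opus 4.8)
The plan is to derive all four parts from the machinery already assembled, namely Lemma \ref{lem2.8} (which converts counts of nonzero solutions into dimension $n$ cyclotomic numbers of order $4$), Lemma \ref{lem2.9} (which reduces $N_n(c)$ to $N_n(g^a)$ with $a=\ind_g(c)\bmod 4$), and the explicit evaluations in Lemmas \ref{exm2.1}, \ref{exm2.2}, \ref{exm2.3}, \ref{exm2.4}. First I would set up the basic inclusion--exclusion that links $N_n(c)$ to the ${N'}_j^{(4)}$'s: for $c\in\mathbb F_q^*$, splitting a solution of $x_1^4+\dots+x_n^4=c$ according to which coordinates are zero gives
\begin{equation*}
N_n(c)=\sum_{j=1}^{n}\binom{n}{j}{N'}_{j}^{(4)}(c),
\end{equation*}
since the $j$ nonzero coordinates must themselves sum (in fourth powers) to $c$, and $c\neq 0$ forces $j\geq 1$. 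By Lemma \ref{lem2.8}, ${N'}_{j}^{(4)}(c)=4^{j}[\,\underbrace{i,\dots,i}_{j}\,]_4$ with $i\equiv\ind_g(1/c)\equiv-\ind_g(c)\pmod 4$; and by Lemma \ref{lem2.9} it suffices to treat the four residue classes $a=\ind_g(c)\bmod 4$ separately, so that $i$ runs over $\{0,3,2,1\}$ respectively.

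Next I would simply specialize. For part (i), $N_1(c)={N'}_1^{(4)}(c)=4[i]_4$, which is $4$ or $0$ by Lemma \ref{exm2.1} according as $i\equiv 0$, i.e. $\ind_g(c)\equiv 0\pmod 4$. For part (ii), $N_2(c)=2{N'}_1^{(4)}(c)+{N'}_2^{(4)}(c)=8[i]_4+16[i,i]_4$; plugging in the values of $[i,i]_4$ from Lemma \ref{exm2.2} (and $[i]_4$ from Lemma \ref{exm2.1}) and distinguishing $q\equiv 1$ versus $q\equiv 5\pmod 8$ yields exactly the stated $q-3+\epsilon_1(c)$ and $q+1+\epsilon_2(c)$, where one checks the four cyclotomic values correspond under $i=-a\bmod 4$ to the four cases of $\epsilon_1,\epsilon_2$ listed in \eqref{eps2}. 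Parts (iii) and (iv) are the same routine with one more term each:
\begin{equation*}
N_3(c)=3{N'}_1^{(4)}(c)+3{N'}_2^{(4)}(c)+{N'}_3^{(4)}(c)=12[i]_4+48[i,i]_4+64[i,i,i]_4,
\end{equation*}
\begin{equation*}
N_4(c)=4{N'}_1^{(4)}(c)+6{N'}_2^{(4)}(c)+4{N'}_3^{(4)}(c)+{N'}_4^{(4)}(c)=16[i]_4+96[i,i]_4+256[i,i,i]_4+256[i,i,i,i]_4,
\end{equation*}
and substituting Lemmas \ref{exm2.3} and \ref{exm2.4} gives the displayed formulas after collecting terms and using $q=s^2+4t^2$ to clear the $t^2$ (resp. $s^2$) contributions where the stated form demands it.

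The conceptual content is thus entirely front-loaded into the cyclotomic lemmas; the only real work here is bookkeeping. The main obstacle I anticipate is purely organizational: keeping the correspondence $i\leftrightarrow a$ straight (note $i\equiv-\ind_g(c)$, so $a=1$ pairs with $i=3$ and $a=3$ with $i=1$, while $a=0,2$ are fixed), and carrying the $q\equiv 1$ versus $q\equiv 5\pmod 8$ split through all four values of $i$ — that is sixteen cases in part (iv), each a short polynomial simplification. I would present part (i) and one representative sub-case of part (ii) in full to fix the method, then assert that the remaining cases follow by the identical computation using Lemmas \ref{exm2.2}--\ref{exm2.4}, exactly as the paper does for the cyclotomic lemmas themselves.
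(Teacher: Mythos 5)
Your proposal is correct and follows essentially the same route as the paper: reduce to the four classes $\ind_g(c)\bmod 4$ via Lemma \ref{lem2.9}, decompose $N_n(c)$ by which coordinates vanish into the binomial combination of the ${N'}_j$'s (the paper writes these as $N_2=N'_2+2N'_1$, $N_3=N'_3+3N'_2+3N'_1$, $N_4=N'_4+4N'_3+6N'_2+4N'_1$), convert via Lemma \ref{lem2.8} to the dimension-$j$ cyclotomic numbers $[\,-\ind_g(c),\dots\,]_4$, and substitute Lemmas \ref{exm2.1}--\ref{exm2.4}, simplifying with $q=s^2+4t^2$. Your handling of the index correspondence $i\equiv-\ind_g(c)\pmod 4$ matches the paper's use of $[4-i,\dots,4-i]_4$, so no gap remains beyond the routine case-by-case arithmetic.
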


\begin{proof}
First of all, let $N'_n(c)$ be the number of zeros of
$x_1^4+\dots +x_n^4=c$ with $x_1, ..., x_n \in\mathbb{F}_q^*$.
Evidently, one has $\ind_g(1)\equiv 0\pmod 4$ and
$\ind_g(\frac{1}{g^j})\equiv 4-j \pmod 4$ for $j\in\{1,2,3\}$.
Since $c\in\mathbb{F}_q^*$, one may let $\ind_g(c)
\equiv i\pmod 4$ with $i\in\{0, 1, 2, 3\}$.

(i). Let $n=1$. By Lemma \ref{lem2.9}, one yields that
$N_1(c)=N_1(g^i)$. Since $c\in\mathbb{F}_q^*$, $x_1^4=c$
has no zero solution $x_1=0$. Thus $N_1(g^i)=N'_1(g^i)$.
By Lemma \ref{lem2.8}, (\ref{eq2.5}) and Remark \ref{rem2.2},
one can deduce that
\begin{equation}\label{eq2.16}
N_1(g^i)=4~\big[\ind_g(\frac{1}{g^i})\big]_{4}=4 [4-i]_4
=\begin{cases}
4, & \mbox{if } i=0, \\
0, & \mbox{if } i\in\{1,2,3\}.
\end{cases}
\end{equation}
Since $N_1(c)=N_1(g^i)$, by (\ref{eq2.16}) we have
\begin{equation*}
N_1(c)=
\begin{cases}
4, & \mbox{if } \ind_g(c)\equiv 0\ \pmod 4, \\
0, & \mbox{if } \ind_g(c)\not\equiv 0\ \pmod 4
\end{cases}
\end{equation*}
as required.

(ii). Let $n=2$. Lemma \ref{lem2.9} gives us that
$N_2(c)=N_2(g^i)$. One can easily check that
\begin{equation}\label{eq2.17}
N_2(c)=N_2(g^i)=N'_2(g^i)+2N'_1(g^i).
\end{equation}
By Lemma \ref{lem2.8} and Remark \ref{rem2.2}, one has
\begin{equation*}
N'_2(g^i)=16 \big[\ind_g(\frac{1}{g^i}),
\ind_g(\frac{1}{g^i})\big]_{4}=16 [4-i, 4-i]_{4}.
\end{equation*}
From Lemma \ref{exm2.2}, one derives that
if $q\equiv 1 \pmod 8$, then
\begin{equation}\label{eq2.18}
N'_2(g^i)=
\begin{cases}
q-6s-11, & \mbox{if } i=0, \\
q+2s+8t-3, & \mbox{if } i=1, \\
q+2s-3, & \mbox{if } i=2, \\
q+2s-8t-3, & \mbox{if } i=3,
\end{cases}
\end{equation}
and if $q\equiv 5 \pmod 8$, then
\begin{equation}\label{eq2.19}
N'_2(g^i)=
\begin{cases}
q+2s-7, & \mbox{if } i=0, \\
q+2s-8t+1, & \mbox{if } i=1, \\
q-6s+1, & \mbox{if } i=2, \\
q+2s+8t+1, & \mbox{if } i=3.
\end{cases}
\end{equation}

Noticing that $q=s^2+4t^2$, putting (\ref{eq2.16}), (\ref{eq2.18})
and (\ref{eq2.19}) into (\ref{eq2.17}) yields that
\begin{equation*}
N_2(c)=
\begin{cases}
q+\epsilon_1(c)-3, & \mbox{if } q\equiv 1\ \pmod 8, \\
q+\epsilon_2(c)+1, & \mbox{if } q\equiv 5\ \pmod 8,
\end{cases}
\end{equation*}
where $\epsilon_1$ and $\epsilon_2$ are determined
by (\ref{eps2}). Part (ii) is proved.

(iii). Let $n=3$. Then in the similar way as in part (ii),
\begin{equation}\label{eq2.20}
N_3(c)=N_3(g^i)=N'_3(g^i)+3N'_2(g^i)+3N'_1(g^i).
\end{equation}
By Lemma \ref{lem2.8} and Remark \ref{rem2.2},
\begin{equation*}
N'_3(g^i)=64~\big[\ind_g(\frac{1}{g^i}),\ \ind_g(\frac{1}{g^i}),\
\ind_g(\frac{1}{g^i})\big]_{4}=64 [4-i, 4-i, 4-i]_4.
\end{equation*}
From Lemma \ref{exm2.3}, one gets that if $q\equiv 1 \pmod 8$, then
\begin{equation}\label{eq2.21}
N'_3(g^i)=
\begin{cases}
q^2+14q+4s^2+24s+21, & \mbox{if } i=0, \\
q^2-10q-8st-24t+9, & \mbox{if } i=1, \\
q^2-6q-4s^2+9, & \mbox{if } i=2, \\
q^2-10q+8st+24t+9, & \mbox{if } i=3,
\end{cases}
\end{equation}
and if $q\equiv 5 \pmod 8$, then
\begin{equation}\label{eq2.22}
N'_3(g^i)=
\begin{cases}
q^2-6q-4s^2+9, & \mbox{if } i=0, \\
q^2+2q+8st+24t-3, & \mbox{if } i=1, \\
q^2-6q+24s-16t^2-3, & \mbox{if } i=2, \\
q^2+2q-8st-24t-3, & \mbox{if } i=3.
\end{cases}
\end{equation}
Applying (\ref{eq2.16}), (\ref{eq2.18}) to (\ref{eq2.22}), one has
\begin{equation*}
N_3(c)=
\begin{cases}
q^2+6s+\epsilon_3(c), & \mbox{if } q\equiv 1\ \pmod 8,\\
q^2+6s+\epsilon_4(c), & \mbox{if } q\equiv 5\ \pmod 8,
\end{cases}
\end{equation*}
where $\epsilon_3$ and $\epsilon_4$ are determined
by (\ref{eps3}). Part (iii) is proved.

(iv). Let $n=4$. In the similar way as in parts (ii)
and (iii), we have
\begin{equation}\label{eq2.23}
N_4(c)=N_4(g^i)=N'_4(g^i)+4N'_3(g^i)+6N'_2(g^i)+4N'_1(g^i).
\end{equation}
By Lemma \ref{lem2.8} and Remark \ref{rem2.2},
\begin{align*}
N'_4(g^i)=&256~\big[\ind_g(\frac{1}{g^i}),\ \ind_g(\frac{1}{g^i}),
\ \ind_g(\frac{1}{g^i}),\ \ind_g(\frac{1}{g^i})\big]_{4}\\
=&256 [4-i, 4-i, 4-i, 4-i]_{4}.
\end{align*}
Then from Lemma \ref{exm2.4}, it follows that
if $q\equiv 1 \pmod 8$, then
\small
\begin{equation}\label{eq2.24}
N'_4(g^i)=
\begin{cases}
q^3-4q^2-60sq-79q-20s^2-60s-34, & \mbox{if } i=0, \\

q^3-4q^2+20sq+48tq+13q+32st-12s+16t^2+48t-18, & \mbox{if } i=1,\\

q^3-4q^2+20sq+13q-12s-48t^2-18, & \mbox{if } i=2,\\

q^3-4q^2+20sq-48tq+13q-32st-12s+16t^2-48t-18, & \mbox{if } i=3,
\end{cases}
\end{equation}
\small
and if $q\equiv 5 \ \pmod 8$, then
\begin{equation}\label{eq2.25}
N'_4(g^i)=
\begin{cases}
q^3-4q^2-28sq+25q+12s^2-12s-10, & \mbox{if } i=0,\\
q^3-4q^2+4sq+16tq-11q-32st-12s+16t^2-48t+6, & \mbox{if } i=1,\\
q^3-4q^2+20sq+21q-60s+80t^2+6, & \mbox{if } i=2,\\
q^3-4q^2+4sq-16tq-11q
+32st-12s+16t^2+48t+6, & \mbox{if } i=3.
\end{cases}
\end{equation}

Finally, applying (\ref{eq2.16}), (\ref{eq2.18}), (\ref{eq2.19}),
(\ref{eq2.21}) to (\ref{eq2.25}) and noticing that $q=s^2+4t^2$,
\begin{equation*}
N_4(c)=
\begin{cases}
q^3-17q-4s^2+\epsilon_5(c), & \mbox{if } q\equiv 1\ \pmod 8,\\
q^3+7q-4s^2+\epsilon_6(c), & \mbox{if } q\equiv 5\ \pmod 8,
\end{cases}
\end{equation*}
where $\epsilon_5$ and $\epsilon_6$ are determined by (\ref{eps4}).
Part (iv) is proved.

This concludes the proof of Lemma \ref{lem2.10}.
\end{proof}

\section{Proof of Theorem \ref{thm1.2}}

In this section, we present the proof of Theorem \ref{thm1.2}.
For any $x\in\mathbb{F}_q$ with $q=p^m$, we define the
{\it trace} of $x$ relative to $\mathbb{F}_p$,
denoted by $\Tr(x)$, as follows:
\begin{equation*}
\Tr(x):=x+x^{p}+\cdots +x^{p^{m-1}}.
\end{equation*}
Clearly, the trace function is a linear mapping
from $\mathbb{F}_q$ onto $\mathbb{F}_p$ (see.
for instance, \cite{[LN]}).
The following lemma is well known in finite fields.

\begin{lem}\label{lem2.11}\cite{[Ap]}
Let $y$ be any element of $\mathbb{F}_q$. Then
\begin{equation*}
\sum_{x\in\mathbb{F}_q}\exp\Big(\frac{2\pi{\rm i}\Tr(xy)}{p}\Big)
=\begin{cases}
q, & \mbox{if } \ y=0, \\
0, & \mbox{if } \ y\neq 0.
\end{cases}
\end{equation*}
\end{lem}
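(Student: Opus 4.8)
The plan is to split into the two cases $y=0$ and $y\ne 0$. The first is immediate: if $y=0$ then $\Tr(xy)=\Tr(0)=0$ for every $x\in\mathbb{F}_q$, so every summand equals $\exp(0)=1$ and the sum equals $\sharp\mathbb{F}_q=q$.

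For $y\ne 0$ I would first reduce to a cleaner sum. Since $x\mapsto xy$ is a bijection of $\mathbb{F}_q$, substituting $z=xy$ gives $\sum_{x\in\mathbb{F}_q}\exp\big(\frac{2\pi{\rm i}\Tr(xy)}{p}\big)=\sum_{z\in\mathbb{F}_q}\exp\big(\frac{2\pi{\rm i}\Tr(z)}{p}\big)$, so it is enough to prove that $\sum_{z\in\mathbb{F}_q}\exp\big(\frac{2\pi{\rm i}\Tr(z)}{p}\big)=0$. Next I would use that $\Tr$ is an $\mathbb{F}_p$-linear surjection of $\mathbb{F}_q$ onto $\mathbb{F}_p$ (recalled in the text just above), so that its kernel is a hyperplane and each fiber $\Tr^{-1}(a)$, $a\in\mathbb{F}_p$, has exactly $q/p=p^{m-1}$ elements. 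Grouping the sum over $z$ according to the value $a=\Tr(z)\in\mathbb{F}_p$ then gives $p^{m-1}\sum_{a=0}^{p-1}\exp\big(\frac{2\pi{\rm i}a}{p}\big)$.

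Finally I would finish by the standard fact that a complete sum of $p$-th roots of unity vanishes: with $\zeta=\exp(2\pi{\rm i}/p)$ a primitive $p$-th root of unity, $\sum_{a=0}^{p-1}\zeta^{a}=\frac{\zeta^{p}-1}{\zeta-1}=0$. Hence the whole sum is $0$ when $y\ne 0$, which together with the first case establishes the claim.

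The only nontrivial ingredient is the surjectivity (equivalently, non-triviality) of the trace map, which the paper has already asserted; if one wanted a self-contained argument one could observe that $\Tr$ is given by a polynomial of degree $p^{m-1}<q$ and so cannot vanish identically on $\mathbb{F}_q$, and that a nonzero $\mathbb{F}_p$-linear functional is automatically onto $\mathbb{F}_p$. Everything else is routine bookkeeping, so I do not anticipate any real obstacle here.
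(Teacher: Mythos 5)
Your proof is correct and complete: the case $y=0$ is immediate, the substitution $z=xy$ for $y\neq 0$ is valid since multiplication by a nonzero element permutes $\mathbb{F}_q$, the fiber count $p^{m-1}$ follows from surjectivity of the trace, and the vanishing of the full sum of $p$-th roots of unity finishes the argument; you even supply the one genuinely needed ingredient (nontriviality of $\Tr$, via the degree bound $p^{m-1}<q$) rather than taking it on faith. The paper itself gives no proof of this lemma — it is quoted from the reference [Ap] — and your argument is precisely the standard orthogonality-of-additive-characters proof found there, so there is nothing to reconcile.
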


For any $u\in\mathbb{F}_q^*$, let
$T_u:=\sum_{v\in\mathbb{F}_q}\exp\big(\frac{2\pi
{\rm i}\Tr(uv^4)}{p}\big)$. The following result
is well known and due to Myerson.

\begin{lem}\label{lem2.12}\cite{[My1]}
Let $\mathbb{F}_q$ be the finite field of $q=p^m=4f+1$
elements with $p$ an odd prime. Let $g$ be a generator
of $\mathbb{F}_q^*$. Then $T_1$, $T_g$, $T_{g^2}$ and
$T_{g^3}$ are the roots of the equation
\begin{align*}
x^4-6qx^2+8qsx+q^2-4qs^2=0\ \ \ if \ q\equiv 1\ \pmod 8,\\
x^4+2qx^2+8qsx+9q^2-4qs^2=0\ \ \ if \ q\equiv 5\ \pmod 8,
\end{align*}
where $s$ is uniquely determined by $q=s^2+4t^2$,
$s\equiv 1\ \pmod 4$, and if $p\equiv 1\ \pmod 4$,
then $\gcd(s,\ p)=1$.
\end{lem}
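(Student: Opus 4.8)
\textbf{Proof proposal for Lemma \ref{lem2.12}.}
The plan is to realize $T_1, T_g, T_{g^2}, T_{g^3}$ as elementary symmetric data of a Galois-stable family and then identify the corresponding monic quartic by computing its power sums (Newton's identities), using only the orthogonality relation in Lemma \ref{lem2.11} and the cyclotomic arithmetic already set up in Section 2. First I would observe that each sum $T_{g^j}$ can be rewritten as a sum over the quartic residue classes: since $v \mapsto v^4$ sends $\mathbb{F}_q^*$ four-to-one onto $C_0$ and $0 \mapsto 0$, one has $T_{g^j} = 1 + 4\sum_{w \in C_0}\exp\big(\tfrac{2\pi\mathrm{i}\,\Tr(g^j w)}{p}\big) = 1 + 4\,\eta_{-j}$, where $\eta_\ell := \sum_{w \in C_\ell}\exp\big(\tfrac{2\pi\mathrm{i}\,\Tr(w)}{p}\big)$ are the four Gaussian periods of order $4$ (here I use that $g^j C_0 = C_{j}$ and reindex). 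Thus it suffices to show the $\eta_0,\eta_1,\eta_2,\eta_3$ are the roots of the quartic obtained from the stated one by the substitution $x \mapsto (x-1)/4$, i.e. to pin down the four elementary symmetric functions $e_1,e_2,e_3,e_4$ of the periods.

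Next I would compute the power sums $P_r = \sum_{\ell=0}^{3}\eta_\ell^{\,r}$ for $r=1,2,3,4$ and invoke Newton's identities. For $r=1$: $\sum_\ell \eta_\ell = \sum_{w \in \mathbb{F}_q^*}\exp\big(\tfrac{2\pi\mathrm{i}\,\Tr(w)}{p}\big) = -1$ by Lemma \ref{lem2.11}, so $e_1 = -1$. For $r=2$ and higher the key device is that a product $\eta_{\ell}\eta_{\ell'}$ expands, via $C_\ell \cdot C_{\ell'}$-type bookkeeping, into a combination of $\eta_m$'s with coefficients governed by the cyclotomic numbers $(i,j)_4$ from Lemma \ref{lem2.4}; summing over $\ell,\ell'$ collapses everything using $\sum_\ell \eta_\ell = -1$ and the relation $\sum_{v \in \mathbb{F}_q}\exp(\tfrac{2\pi\mathrm i\,\Tr(v)}{p}) = 0$. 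Concretely, $P_2 = \sum_{\ell,\ell'}\eta_\ell\eta_{\ell'} - 2e_2$... rather, I would directly evaluate $P_2 = \sum_\ell \eta_\ell^2$ by writing $\eta_\ell^2 = \sum_{w,w' \in C_\ell}\exp(\tfrac{2\pi\mathrm i\,\Tr(w+w')}{p})$ and counting, for each fixed ratio $w'/w = g^{4u}$... in fact the cleanest route is to note $\sum_\ell \eta_\ell^2 = \sum_{a \in \mathbb{F}_q}r(a)\exp(\tfrac{2\pi\mathrm i\,\Tr(a)}{p})$ where $r(a) = \#\{(x,y) \in (\mathbb F_q^*)^2 : x^4 + y^4 = a,\ x^4 = y^4 \text{ in the same class}\}$, and then tie $r(a)$ to the dimension-$2$ cyclotomic numbers $[0,0]_4,\dots,[3,3]_4$ of Lemma \ref{exm2.2}; similarly $P_3$ and $P_4$ connect to Lemmas \ref{exm2.3} and \ref{exm2.4}. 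Feeding $e_1$ and these $P_r$ into Newton's identities $e_2 = \tfrac12(e_1 P_1 - P_2)$, etc., yields $e_2,e_3,e_4$ as explicit polynomials in $q,s,t$; the stated split into the $q \equiv 1$ and $q\equiv 5 \pmod 8$ cases is exactly the $f$-even versus $f$-odd dichotomy of Lemma \ref{lem2.4}, and the constraint $q = s^2 + 4t^2$ eliminates $t$.

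Finally I would translate back: if $\eta$ runs over roots of $y^4 - e_1 y^3 + e_2 y^2 - e_3 y + e_4 = 0$, then $x = 1 + 4y$ runs over roots of the quartic obtained by clearing the substitution, and a direct expansion should produce precisely $x^4 - 6qx^2 + 8qsx + q^2 - 4qs^2$ when $q \equiv 1 \pmod 8$ and $x^4 + 2qx^2 + 8qsx + 9q^2 - 4qs^2$ when $q \equiv 5 \pmod 8$. The normalization $s \equiv 1 \pmod 4$ together with the sign condition $2t \equiv sg^{3(q-1)/4} \pmod p$ (equivalently the choice of generator $g$) is what removes the residual ambiguity in the sign of $s$ and $t$, matching the conventions of Lemma \ref{lem2.4}. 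The main obstacle I anticipate is the $P_3$ and $P_4$ evaluations: expressing $\sum_\ell \eta_\ell^3$ and $\sum_\ell \eta_\ell^4$ cleanly in terms of the dimension-$3$ and dimension-$4$ cyclotomic numbers requires care with the reindexing $g^j C_0 = C_j$ and with the boundary terms coming from solutions where some $x_i = 0$ or where cross-terms vanish (the $\alpha,\beta,\gamma,\theta$ corrections in Lemmas \ref{lem2.6} and \ref{lem2.7}); once those power sums are in hand, the rest is the mechanical Newton's-identity computation, which I would not spell out in full. An alternative, and perhaps the route actually taken, is to bypass power sums: directly compute $\prod_{j=0}^{3}(x - T_{g^j})$ by expanding $e_1,\dots,e_4$ of the $T_{g^j}$ themselves as fourfold exponential sums and collapsing via Lemma \ref{lem2.11}, which amounts to the same cyclotomic-number input organized slightly differently.
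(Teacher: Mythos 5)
There is nothing in the paper to compare against: Lemma \ref{lem2.12} is not proved there, it is imported verbatim from Myerson \cite{[My1]} (with the normalization of $s,t$ coming from \cite{[KR],[My2]}). Judged on its own, your outline is the classical derivation of the quartic period polynomial and is viable: $T_{g^j}=1+4\eta_j$ (note $g^jC_0=C_j$, so the index is $j$, not $-j$ -- harmless, since only the set of four values matters), so the lemma reduces to identifying the symmetric functions of the Gaussian periods, which can indeed be extracted from the cyclotomic numbers of Section 2. Myerson organizes the same computation through Gauss and Jacobi sums; your closing ``alternative route'' (expand $e_1,\dots,e_4$ of the $T_{g^j}$ directly and collapse with Lemma \ref{lem2.11}) is in fact the cleaner version, e.g.\ $\sum_j T_{g^j}=0$ explains at once why the stated quartics have no cubic term.

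Before this counts as a proof, three things need repair. First, the substitution goes the other way: from $T=1+4\eta$ the periods are roots of $Q(1+4y)$, where $Q$ is the stated quartic, not of $Q\big(\frac{x-1}{4}\big)$. Second, your $r(a)=\#\{(x,y)\in(\mathbb F_q^*)^2:x^4+y^4=a\}$ is not the right count: it sees only pairs in $C_0\times C_0$ (with multiplicity $16$) and would compute $\eta_0^2$, not $P_2=\sum_\ell\eta_\ell^2$. The correct count is $R_r(a)=\#\{(w_1,\dots,w_r)$ lying in one common class $C_\ell$ with $w_1+\cdots+w_r=a\}$, and the fact that makes the whole method non-circular -- which you gesture at but never state -- is that for $a\neq 0$ this count is independent of the class of $a$ (scale by $a^{-1}$ and note the sum over $\ell$ absorbs the shift), so that Lemma \ref{lem2.11} gives $P_r=R_r(0)-\sum_{j=0}^{3}[\,j,\dots,j\,]_4$; without that independence the expansion returns a combination of the unknown periods. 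With it, $P_2,P_3,P_4$ follow from Lemmas \ref{exm2.2}, \ref{exm2.3}, \ref{exm2.4} together with $\ind_g(-1)\bmod 4$, which is exactly where the $q\equiv1$ versus $q\equiv 5\pmod 8$ ($f$ even/odd) split enters via $R_r(0)$. Third, the evaluations of $e_3,e_4$ -- the only place where $q=s^2+4t^2$ and the normalization of $s$ actually do any work -- are precisely the computations you declare mechanical and omit, so the proposal remains a plan rather than a proof. As a quick confirmation that the plan is on track: $P_1=-1$, and for $q\equiv1\pmod 8$ one gets $R_2(0)=q-1$, $\sum_j[j,j]_4=\frac{q-5}{4}$, hence $P_2=\frac{3q+1}{4}$ and $e_2(T)=-6q$, matching the coefficient of $x^2$ in the stated equation (the case $q\equiv5\pmod 8$ gives $P_2=\frac{1-q}{4}$ and $e_2(T)=2q$).
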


We can now give the proof of Theorem \ref{thm1.2}.\\

{\it Proof of Theorem \ref{thm1.2}.}
(i). By Lemma \ref{lem2.11}, one has
\begin{align}\label{eq3.1}
N_n(c)&=\frac{1}{q}\sum_{x\in\mathbb{F}_q}~~~
\sum_{(x_1,\dots,x_n)\in\mathbb{F}_q^n}
\exp\Big(\frac{2\pi{\rm i}\Tr\big(x(x_1^4+\dots+x_n^4-c)\big)}
{p}\Big)\notag\\
&=\frac{1}{q}\sum_{x\in\mathbb{F}_q}
\Big(\sum_{y\in\mathbb{F}_q}\exp\Big(\frac{2\pi{\rm i}\Tr(xy^4)}{p}\Big)\Big)^n
\exp\Big(\frac{2\pi{\rm i}\Tr(-xc)}{p}\Big)\notag\\
&=q^{n-1}+\frac{1}{q}\sum_{x\in\mathbb{F}_q^*}\bigg(\sum_{y\in\mathbb{F}_q}
\exp\Big(\frac{2\pi{\rm i}\Tr(xy^4)}{p}\Big)\bigg)^n\exp\big(\frac{2\pi{\rm i}\Tr(-xc)}{p}\big)\notag\\
&:=q^{n-1}+\frac{1}{q}\ R(n,c).
\end{align}

Now we turn our attention to the evaluation of $R(n,c)$. Recalling that $$T_u=\sum_{y\in\mathbb{F}_q}\exp\big(\frac{2\pi {\rm i}\Tr(uy^4)}{p}\big),$$
we have
\begin{align}\label{eq3.2}
R(n,c)=&\sum_{x\in\mathbb{F}_q^*}T_x^n\exp\big(\frac{2\pi{\rm i}\Tr(-xc)}{p}\big)
=\sum_{l=0}^3 \sum_{\substack{x\in\mathbb{F}_q^*\\ \ind_g(x)\equiv l\pmod 4}}T_x^n\exp\big(\frac{2\pi{\rm i}\Tr(-xc)}{p}\big).
\end{align}
Let $\ind_g(x)\equiv l\pmod 4$ with $0\le l\le 3$.
Then $x=g^{4k+l}$ for some integer $k$ and
\begin{align*}
T_x&=\sum_{y\in\mathbb{F}_q}\exp\Big(\frac{2\pi{\rm i}\Tr(g^{4k+l}y^4)}{p}\Big)
=\sum_{y\in\mathbb{F}_q}\exp\Big(\frac{2\pi{\rm i}\Tr\big(g^l(g^{k}y)^4\big)}{p}\Big).
\end{align*}
Note that $g^{k}y$ runs through $\mathbb{F}_q$ if $y$ runs over $\mathbb{F}_q$. Thus
\begin{equation}\label{eq3.3}
T_x=\sum_{y\in\mathbb{F}_q}\exp\Big(\frac{2\pi{\rm i}\Tr(g^{l}y^4)}{p}\Big)=T_{g^l}.
\end{equation}
With (\ref{eq3.3}) applied to (\ref{eq3.2}), one has
\begin{align}\label{eq3.4}
R(n,c)=\sum_{l=0}^{3}T_{g^l}^{n}\lambda_l(c),
\end{align}
where
\begin{equation*}
\lambda_l(c):=\sum_{\substack{x\in\mathbb{F}_q^*\\
\ind_g(x)\equiv l(\bmod4)}}\exp\big(\frac{2\pi{\rm i}\Tr(-xc)}{p}\big).
\end{equation*}

The proof is divided into the following two cases.

{\sc Case 1.} $q\equiv 1\ \pmod 8$. Then for $0\le l\le 3$,
by Lemma \ref{lem2.11}, one has
\begin{equation}\label{eq3.5}
T_{g^l}^4-6qT_{g^l}^2+8qsT_{g^l}+q^2-4qs^2=0.
\end{equation}
Multiplying $T_{g^l}^{n-4}\lambda_l(c)$ on both sides of
(\ref{eq3.5}), one arrives at
\begin{equation*}
T_{g^l}^n\lambda_{l}(c)-6qT_{g^l}^{n-2}\lambda_{l}(c)+8qsT_{g^l}^{n-3}
\lambda_{l}(c)+(q^2-4qs^2)T_{g^l}^{n-4}\lambda_{l}(c)=0.
\end{equation*}
Then taking the sum gives us that
\begin{align}\label{eq3.6}
\sum_{l=0}^{3}T_{g^l}^n\lambda_{l}(c)-6q\sum_{l=0}^{3}T_{g^l}^{n-2}\lambda_{l}(c)
+8qs\sum_{l=0}^{3}T_{g^l}^{n-3}\lambda_{l}(c)+(q^2-4qs^2)
\sum_{l=0}^{3}T_{g^l}^{n-4}\lambda_{l}(c)=0.
\end{align}

For $n\ge 5$, by (\ref{eq3.4}) and (\ref{eq3.6}), we have
\begin{align}\label{eq3.7}
R(n,c)-6qR(n-2,c)+8qsR(n-3,c)+(q^2-4qs^2)R(n-4,c)=0.
\end{align}
It then follows from (\ref{eq3.7}) that
\begin{align*}
&(1-6qx^2+8qsx^3+(q^2-4qs^2)x^4)\sum_{n=1}^{\infty}R(n,c)x^n\\
=&\sum_{n=1}^{\infty}R(n,c)x^n-6q\sum_{n=1}^{\infty}R(n,c)x^{n+2}+
8qs\sum_{n=1}^{\infty}R(n,c)x^{n+3}+(q^2-4qs^2)\sum_{n=1}^{\infty}R(n,c)x^{n+4}\\
=&\sum_{n=1}^{4}R(n,c)x^n-6q\sum_{n=1}^{2}R(n,c)x^{n+2}+8qsR(1,c)x^{4}\\
&+\sum_{n=5}^{\infty}(R(n,c)-6qR(n-2,c)+8qsR(n-3,c)+(q^2-4qs^2)R(n-4,c))x^n\\
=&R(1,c)(x-6qx^3+8qsx^4)+R(2,c)(x^2-6qx^4)+R(3,c)x^3+R(4,c)x^4.
\end{align*}
Therefore
\begin{align*}
&\sum_{n=1}^{\infty}R(n,c)x^n=
\frac{R(1,c)(x-6qx^3+8qsx^4)+R(2,c)(x^2-6qx^4)+R(3,c)x^3
+R(4,c)x^4}{1-6qx^2+8qsx^3+(q^2-4qs^2)x^4}.
\end{align*}
Then noticing the well-known identity that
$\sum_{n=1}^{\infty}q^{n-1}x^n=\frac{x}{1-qx},$
by (\ref{eq3.1}) we have
\begin{align}\label{eq3.8}
&\sum_{n=1}^{\infty}N_n(c)x^n
=\sum_{n=1}^{\infty}\big(q^{n-1}+\frac{1}{q}R(n,c)\big)x^n\notag\\
=&\frac{x}{1-qx}+\frac{1}{q}\frac{R(1,c)(x-6qx^3+8qsx^4)+R(2,c)(x^2-6qx^4)
+R(3,c)x^3+R(4,c)x^4}{1-6qx^2+8qsx^3+(q^2-4qs^2)x^4}\notag\\
:=& \frac{x}{1-qx}+\frac{\tilde B_1(c,x)}{1-6qx^2+8qsx^3+(q^2-4qs^2)x^4}.
\end{align}
Since
\begin{align*}
\tilde B_1(c,x)=&(N_1(c)-1)(x-6qx^3+8qsx^4)+(N_2(c)-q)(x^2-6qx^4)\\
&+(N_3(c)-q^2)x^3+(N_4(c)-q^3)x^4\\
=&(N_1(c)-1)x+(N_2(c)-q)x^2+(-6qN_1(c)+N_3(c)+6q-q^2)x^3\\
&+(8qsN_1(c)-6qN_2(c)+N_4(c)-8qs+6q^2-q^3)x^4,
\end{align*}
it follows from Lemma \ref{lem2.10} that
$$
\tilde B_1(c,x)=6sx^3+(q-4s^2)x^4+B_1(c,x),
$$
where $B_1(c,x)$ is given in (\ref{eq1.2}).
So the desired result follows immediately.

{\sc Case 2.} $q\equiv 5\ \pmod 8$.
For $0\le l\le 3$, by Lemma \ref{lem2.12} one has
\begin{equation*}
T_{g^l}^4+2qT_{g^l}^2+8qsT_{g^l}+9q^2-4qs^2=0.
\end{equation*}
In the same way as in {\sc case 1}, we can derive that if
$n\ge 5$, then
\begin{align}\label{eq3.10}
R(n,c)+2qR(n-2,c)+8qsR(n-3,c)+(9q^2-4qs^2)R(n-4,c)=0.
\end{align}
It then follows from (\ref{eq3.10}) that
\begin{align}\label{eq3.11'}
&\big(1+2qx^2+8qsx^3+(9q^2-4qs^2)x^4\big)\sum_{n=1}^{\infty}R(n,c)x^n\notag\\
=&\sum_{n=1}^{\infty}R(n,c)x^n+2q\sum_{n=1}^{\infty}R(n,c)x^{n+2}+8qs\sum_{n=1}^{\infty}R(n,c)x^{n+3}
+(9q^2-4qs^2)\sum_{n=1}^{\infty}R(n,c)x^{n+4} \notag\\
=&\sum_{n=1}^{4}R(n,c)x^n+2q\sum_{n=1}^{2}R(n,c)x^{n+2}+8qs R(1,c)x^{4}\notag\\
&+ \sum_{n=5}^{\infty}\big(R(n,c)+2qR(n-2,c)+8qsR(n-3,c)+(9q^2-4qs^2)R(n-4,c)\big)x^n\notag\\
=&R(1,c)(x+2qx^3+8qsx^4)+R(2,c)(x^2+2qx^4)+R(3,c)x^3+R(4,c)x^4.
\end{align}

Since $\sum_{n=1}^{\infty}q^{n-1}x^n=\frac{x}{1-qx}$, by (\ref{eq3.1})
and (\ref{eq3.11'}) one can deduce that
\begin{align}\label{eq3.11}
\sum_{n=1}^{\infty}N_n(c)x^n=&\sum_{n=1}^{\infty}
\big(q^{n-1}+\frac{1}{q}R(n,c)\big)x^n\notag\\
=&\frac{x}{1-qx}+\frac{\tilde B_2(c,x)}
{1+2qx^2+8qsx^3+(9q^2-4qs^2)x^4},
\end{align}
where
\begin{align*}\tilde B_2(c,x)
:=&(N_1(c)-1)(x+2qx^3+8qsx^4)+(N_2(c)-q)(x^2+2qx^4)\\
&+(N_3(c)-q^2)x^3+(N_4(c)-q^3)x^4.
\end{align*}
By Lemma \ref{lem2.10}, one has
$$
\tilde B_2(c,x)=6sx^3+(9q-4s^2)x^4+B_2(c,x),
$$
where $B_1(c,x)$ is given in (\ref{eq1.3'}).
Then the expected result follows immediately
from (\ref{eq3.11}).

(ii). Let $q\equiv 3\pmod 4$. It is well known
that (see, for example, \cite{[LN]})
\begin{equation}\label{eq1.2'}
N(x^k=c)=\sum_{j=0}^{d-1}\varphi^{j}(c),
\end{equation}
where $\varphi$ is a multiplicative character of
$\mathbb{F}_q$ of order $d=\gcd(k, q-1)$.
Since $q\equiv 3\pmod 4$, by (\ref{eq1.2'})
$N(x^4=c)=N(x^2=c)$. It then follows that
\begin{align*}
N(x_1^4+...+x_n^4=c)&=\sum_{(x_1, ..., x_n)
\in\mathbb{F}_q^n\atop x_1^4+...+x_n^4=c} 1
=\sum_{(x_1, ..., x_n)\in\mathbb{F}_q^n
\atop c_1+...+c_n=c}N(x_1^4=c_1)\cdots N(x_n^4=c_n)\\
&=\sum_{(x_1, ..., x_n)\in\mathbb{F}_q^n\atop
c_1+...+c_n=c}N(x_1^2=c_1)\cdots N(x_n^2=c_n)\\
&=\sum_{(x_1, ..., x_n)\in\mathbb{F}_q^n\atop
x_1^2+...+x_n^2=c} 1
=N(x_1^2+\dots +x_n^2=c).
\end{align*}
Then by the formula for $N(x_1^2+...+x_n^2=c)$
given in \cite{[LN]}, we obtain that
\begin{equation}\label{518eq1}
N_n(c)=
\begin{cases}
q^{n-1}+v(c)q^{\frac{n-2}{2}}\eta\big((-1)^{\frac{n}{2}}\big),
& {\rm if }\ n \ {\rm is\ even}, \\
q^{n-1}+q^{\frac{n-1}{2}}\eta\big((-1)^{\frac{n-1}{2}}c\big),
& {\rm if }\ n \ {\rm is\ odd},
\end{cases}
\end{equation}
where the integer-valued function $v$ on $\mathbb{F}_q$
is defined by $v(c):=-1$ if $c\in\mathbb{F}_q^*$ and
$v(0):=q-1$, and $\eta$ is the quadratic character
of $\mathbb{F}_q$.

Since $q\equiv 3\pmod 4$, one has $\eta(-1)=-1$.
By (\ref{518eq1}), we have
\begin{equation}\label{518eqcsz}
N_1(c)-q^0=
\begin{cases}
0, & \mbox{if } c=0, \\
1, & \mbox{if } c \text{\ is\ square}, \\
-1, & \mbox{if } c \text{\ is\ non-square}, \\
\end{cases}
N_2(c)-q=
\begin{cases}
1-q, & \mbox{if } c=0, \\
1, & \mbox{if } c \text{\ is\ square}, \\
1, & \mbox{if } c \text{\ is\ non-square}, \\
\end{cases}
\end{equation}
and
$N_{n+2}(c)-q^{n+1}=-q(N_{n}(c)-q^{n-1})$
for all $c\in \mathbb F_q$ and positive integer $n$.
It then follows that
\begin{align}\label{518eq2}
&(1+qx^2)\sum_{n=1}^{\infty}(N_n(c)-q^{n-1})x^n\notag\\
=&(N_1(c)-1)x+(N_2(c)-q^{})x^2+\sum_{n=3}^{\infty}
\big(N_n(c)-q^{n-1}+q(N_{n-2}(c)-q^{n-3})\big)x^n\notag\\
=&(N_1(c)-1)x+(N_2(c)-q^{})x^2.
\end{align}

Putting (\ref{518eqcsz}) into (\ref{518eq2}) gives us that
\begin{align*}
\sum_{n=1}^{\infty}N_n(c)x^n=
\begin{cases}
\frac{x}{1-qx}+\frac{(1-q)x^2}{1+qx^2}, & \mbox{if } c=0, \\
\frac{x}{1-qx}+\frac{x+x^2}{1+qx^2}, & \mbox{if } c \text{\ is\ square},\\
\frac{x}{1-qx}+\frac{-x+x^2}{1+qx^2}, & \mbox{if } c \text{\ is\ non-square}. \\
\end{cases}
\end{align*}
So part (ii) is proved.

This concludes the proof of Theorem \ref{thm1.2}. \hfill$\Box$

\section{Proof of Theorem \ref{thm1.3}}

In this section, we present the proof of Theorem \ref{thm1.3}.\\

{\it Proof of Theorem \ref{thm1.3}.}
(i). Let $q\equiv 1\pmod 4$. Since $g$ is a generator of $\mathbb{F}_q^*$,
one has $-1=g^{\frac{q-1}{2}}$. Thus $-1$ is quartic if
$q\equiv 1\pmod 8$ and non-quartic if $q\equiv 5\pmod 8$. It follows that
\begin{align*}
M_n(y)
=\sum_{(x_1, ..., x_n)\in\mathbb{F}_q^n\atop x_1^4+...+yx_n^4=0}1
&=\sum_{(x_1, ..., x_{n-1})\in\mathbb{F}_q^{n-1}\atop x_1^4+...+x_{n-1}^4=0}1+
\sum_{x_n\in\mathbb{F}_q^*}\sum_{(x_1, ..., x_{n-1})\in\mathbb{F}_q^{n-1}\atop (\frac{x_1}{x_n})^4+...+(\frac{x_{n-1}}{x_n})^4=(-1)^{\frac{q-1}{4}}y}1\\
&=N_{n-1}(0)+\sum_{x_n\in\mathbb{F}_q^*}\sum_{(x_1, ..., x_{n-1})\in\mathbb{F}_q^{n-1}
\atop x_1^4+...+x_{n-1}^4=(-1)^{\frac{q-1}{4}}y}1\\
&=N_{n-1}(0)+(q-1)N_{n-1}((-1)^{\frac{q-1}{4}}y).
\end{align*}
Hence
\begin{equation}\label{eq3.13}
\sum_{n=1}^{\infty}M_{n+1}(y)x^n=\sum_{n=1}^{\infty}N_n(0)x^n
+(q-1)\sum_{n=1}^{\infty}N_n((-1)^{\frac{q-1}{4}}y)x^n.
\end{equation}
By (\ref{eq1.1}), Theorem \ref{thm1.2} and (\ref{eq3.13}),
one can deduce the desired result for
$\sum_{n=1}^{\infty}M_{n+1}(y)x^n$.

(ii). Let $q\equiv 3\pmod 4$. Then by $-1=g^{\frac{q-1}{2}}$,
we know that $-1$ is non-square. In the same way as in the proof of
part (ii) of Theorem 1.1, we have
\begin{align*}
M_n(y)=&\sum_{(x_1, ..., x_{n})\in\mathbb{F}_q^{n}\atop
x_1^4+...+yx_n^4=0}1
= \sum_{(x_1, ..., x_{n})
\in\mathbb{F}_q^{n}\atop x_1^2+...+yx_n^2=0}1\\
=& \sum_{(x_1, ..., x_{n-1})\in\mathbb{F}_q^{n-1}\atop
x_1^2+...+x_{n-1}^2=0}1+\sum_{x_n\in\mathbb{F}_q^*}
\sum_{(x_1, ..., x_{n-1})\in\mathbb{F}_q^{n-1}\atop
x_1^2+...+x_{n-1}^2=-y}1\\
=& N_{n-1}(0) +(q-1)N_{n-1}(-y)\\
=& N_{n-1}(0) +(q-1)N_{n-1}(1).
\end{align*}
Therefore
\begin{align*}
\sum_{n=1}^{\infty}M_{n+1}(y)x^n=&\sum_{n=1}^{\infty}N_n(0)x^n
+(q-1)\sum_{n=1}^{\infty}N_n(1)x^n\\
=&\frac{x}{1-qx}+\frac{(1-q)x^2}{1+qx^2}+(q-1)
\Big(\frac{x}{1-qx}+\frac{x+x^2}{1+qx^2}\Big)\\
=&\frac{qx}{1-qx}+\frac{(q-1)x}{1+qx^2}
\end{align*}
as expected. The proof of Theorem \ref{thm1.3} is complete.
\hfill$\Box$
\bibliographystyle{amsplain}

\end{document}